\def\R{\mathbb{R}}
\def\S{\mathbb{S}}
\def\I{\mathcal{I}}
\def\J{\mathcal{J}}
\def\Jj{\frak{J}}
\def\1{\mathbb{1}}
\def\grad{\nabla}
\def\d{~\mathrm{d}}
\def\ln{\mathrm{ln}}
\def\sin{\mathrm{sin}}
\def\cos{\mathrm{cos}}
\def \det{\mathrm{det}}
\def\la{\langle}
\def\lla{\left\langle}
\def\ra{\rangle}
\def\rra{\right\rangle}
\def\dt{\frac{\mathrm{d}}{\mathrm{d}t}}
\def\p{\partial}
\def\1{\mathbb{1}}
\def\nyx{\frac{y-x}{|y-x|}}
\def\jxy{J(x-y,v-w)}
\newtheorem{theorem}{Theorem}[section]
\newtheorem{lemma}{Lemma}
\newtheorem{corollary}{Corollary}
\newtheorem*{definition}{Definition}
\title[Conservative Formulations: Enskog \& Povzner Equations]{Conservative Formulations of the Standard Enskog and Povzner Equations}
\author{Zhe Chen}
\address{Laboratoire MAP5, Université Paris Cité, 45 rue des Saints-Pères, 75270 Paris Cedex 06, France}
\email{zhe.chen@u-paris.fr}
\begin{document}

\begin{abstract}
This article introduces a conservative formulation of the Standard Enskog equation and the Povzner equation, both of which generalize the Boltzmann equation by incorporating the contribution of particle volume in collisions. The primary result expresses these collision integrals as the divergence with respect to the velocity variable $v$  of a mass current. Moreover, the terms $v C[f,f]$ and $|v|^2 C[f,f]$, where $C[f,f]$ denotes the Standard Enskog or Povzner collision integral, are represented as phase-space divergences—i.e., divergences in both position and velocity—of corresponding momentum and energy currents. This work extends Villani's earlier result [Math. Modelling Numer. Anal. M2AN 33 (1999), 209–227] for the classical Boltzmann equation to the case of dense gases.
\end{abstract}

\subjclass[2020]{35Q20, 82C40, 76P05}
\keywords{Kinetic theory of gases; Dense gases; Boltzmann--Enskog equation;
Povzner equation; Local conservation laws; Boltzmann's H theorem}
\maketitle

\section*{Introduction}
This paper focuses on collisional kinetic models designed to describe the dynamics of monatomic gases. The most classical example is the Boltzmann equation, introduced by Ludwig Boltzmann in 1872, which models the statistical behavior of dilute gas particles through binary collisions. The Standard Enskog equation, proposed by David Enskog in 1921, extends Boltzmann's framework to dense gases by accounting for particle size and excluded volume. The Povzner collision integral, introduced in 1965, generalizes the model by allowing collisions between molecules to be elastic or soft. Roughly speaking, Enskog's model permits collisions on a sphere, while Povzner's model extends this to collisions within a spherical shell.

The kinetic equation is connected to the conservation laws of mass, momentum, and energy through the definition of macroscopic quantities via the particle distribution function; see Section 3.3 in \cite{cercignani1994}, for example. At the time of this writing, the direct verification of local conservation laws for mass, momentum, and energy by classical solutions of the Standard Enskog equation or the Povzner equation, as discussed in Section \ref{sec:collisional kinetic models}, does not seem to have been studied systematically. To address this gap, we propose a systematic method for deriving local conservation laws in the framework of kinetic models for these two equations. Our approach is based on expressing the collision integrals as divergences in the position and velocity variables of mass, momentum, and energy currents, drawing inspiration from a lesser-known idea by Landau regarding the Boltzmann collision integral; see, for instance, \cite{villani1999,Landau10}.

The outline of this paper is as follows: In Section \ref{sec:collisional kinetic models}, we first revisit the classical Boltzmann equation and introduce the motivation behind this work. We then present the two models under consideration—the Standard Enskog equation and the Povzner equation. Section \ref{sec:Main result} is dedicated to the main results of the paper. Subsequently, in Section \ref{sec:Application}, we explore the application of these results to macroscopic dynamics and local conservation laws. Finally, the proofs of the main results are provided in Section \ref{sec:proof of Enskog} and Section \ref{sec:proof of Povzner}, respectively. Similar results for a class of delocalized collision integrals which does not include the Standard Enskog and the Povzner collision integrals have been obtained in \cite{CCG2024}.
\section{Collisional kinetic Models}\label{sec:collisional kinetic models}
\subsection{The Boltzmann equation}
We begin with considering the following collisional kinetic equation 
\begin{equation}\label{genral kinetic equation}
    (\p_t+v\cdot\grad_x)f(t,x,v)=C[f,f](t,x,v),
\end{equation}
where $f(t,x,v)\in \R_+$ is a single particle distribution depending on time $t\in \R_+$ space $x=(x_1,x_2,x_3)\in\R^3$, and velocity $v=(v_1,v_2,v_3)\in\R^3$. The most classical example is the hard sphere Boltzmann collision operator defined by
\begin{equation}\label{boltzmann collision operator}
    B[f,g]:=\int_{\R^d\times\S^{d-1}}(f(x,v')g(x,w')-f(x,v)g(x,w))\la v-w,n\ra_+\d n\d w,
\end{equation}
where $\la\cdot,\cdot\ra$ denotes the inner product in Euclidean space, and $\la\cdot,\cdot\ra_+:=\max\{0,\la\cdot,\cdot\ra\}$. The pre-collision velocities $(v',w')$ are determined by the unit vector $n\in \S^{2}$ 
\begin{equation}\label{pre-to-post collision transformation}
    \begin{cases}
    v'(v,w,n)=v-n\la n,v-w\ra\\
    w'(v,w,n)=w+n\la n,v-w\ra,
    \end{cases}
\end{equation}

Here we list some basic properties of the collision transform;
\begin{itemize}
    \item The collision transform is reversible:
    \begin{equation}\label{reversible collision}
        v''(v'(v,w,n),w'(v,w,n),n)=v;\quad w''(v'(v,w,n),w'(v,w,n),n)=w.
    \end{equation}
    \item The collision transform is symmetric:
    \begin{equation}\label{symmetry collision}
        v'(w,v,n)=w'(v,w,n);\quad w'(w,v,n)=v'(v,w,n).
    \end{equation}
    \item The collision transform is even with respect to $n$:
    \begin{equation}\label{collision is even wrsp n}
        v'(w,v,-n)=v'(v,w,n);\quad w'(w,v,-n)=w'(v,w,n).
    \end{equation}
    \item collision transform has Jacobian determinant:
    \begin{equation}\label{Jacobian of collision}
       \det\left(\frac{\p (v',w')}{\p(v,w)}\right)=-1.
    \end{equation}
    \item The collision transform conserves momentum and energy:
    \begin{equation}\label{collision conserves momentum and energy}
        v'(w,v,n)+w'(v,w,n)=v+w;\quad |v'(x,w,n)|^2+|w'(v,w,n)|^2=|v|^2+|w|^2.
    \end{equation}
     \item The collision transform reverses the direction with respect to $n$:
    \begin{equation}\label{post-collision inverses the direction}
        \la v'(v,w,n)-w'(v,w,n),n\ra=-\la v-w,n\ra
    \end{equation}
\end{itemize}
The two particles collide at the same position $x$. Due to the pre-to-post transformation \eqref{pre-to-post collision transformation}, the Boltzmann collision operator locally conserves the mass, momentum, and energy respectively, namely for classical solution with rapidly decaying of \eqref{genral kinetic equation} with Boltzmann collision integrals,
\begin{equation}\label{convervation of boltzmann collision operator}
 \dt\int_{\R^3} f(t,x,v)
\begin{pmatrix}
1\\
v_1\\
v_2\\
v_3\\
|v|^2
\end{pmatrix}\d v=
    \int_{\R^3} {B}[f,f]
\begin{pmatrix}
1\\
v_1\\
v_2\\
v_3\\
|v|^2
\end{pmatrix}\d v=\mathbf{0}.
\end{equation}
This result is given by the weak formula for the Boltzmann collision interal which can be found in every textbook in kinetic theory, such as section 3.1 in \cite{cercignani1994}. 
If we generalize the model to dense gases, which means the molecular radius cannot be neglected.
In that case, the centers of two colliding particles are no longer at the same position in space.

 For Enskog's model and Povzner's model, it is easily seen that one cannot arrive the same local result as in \eqref{convervation of boltzmann collision operator}. In other words, the quantities shown in \eqref{convervation of boltzmann collision operator} do not always vanish. On the other hand, we have the global conservation 
\begin{equation}\label{global conservation of collision operator}
   \int_{\R^3\times \R^3} E[f,f]
\begin{pmatrix}
1\\
v_1\\
v_2\\
v_3\\
|v|^2
\end{pmatrix}\d v\d x= \int_{\R^3\times \R^3} P[f,f]
\begin{pmatrix}
1\\
v_1\\
v_2\\
v_3\\
|v|^2
\end{pmatrix}\d v\d x=0.
\end{equation}
This result for the Povzner collision operator can be found in \cite{FORNASIER2011}. For the Enskog version, the reader can read Chapter 16 in \cite{chapman1990}. 
In \cite{Landau10} (see \cite{villani1999} for a rigorous discussion), it was found that the Boltzmann collision integral can be expressed as the divergence in $v$ of a mass current, $B[f,f]=-\grad_v\cdot \mathcal{J}_0(f,f)$ where 
\[
\mathcal{J}_0(f,f):=-\int_{\la v-w,n\ra>0}\int^{\la v-w,n\ra}_0\la v-w,n\ra f(x,v+sn)f(x,w+sn)n\d n\d w\d s.
\]
In [PL12], this formula is used to derive the Landau equation from the Boltzmann equation. Comparing to the delocalized collision operator, a natural question comes from \eqref{global conservation of collision operator} : can we express delocalized collision operator as the following conservative form
\begin{equation}\label{general conservative form}
C(f,f)\left(
\begin{aligned}
&1\\
&v_1\\
&v_2\\
&v_3\\
&|v|^2
\end{aligned}\right)
= \nabla_v \cdot \mathbf{J} + \nabla_x \cdot \mathbf{I}  ?
\end{equation}

\subsection{The Standard Enskog equation}
We write the following so-called Standard Enskog equation
\begin{equation}
    (\p_t+v\cdot\grad_x)f(t,x,v)=E[f,f](t,x,v),
\end{equation}
where Enskog collision integral is defiend as
\begin{multline}\label{Standard Enskog collision integral}
    E[f,g](t,x,v):=\int_{\R^3\times\S^2}\bigg[\chi(x-\frac{1}{2}\sigma n)f(t,x,v')g(t,x-\sigma n,w')\\
    -\chi(x+\frac{1}{2}\sigma n)f(t,x,v)g(t,x+\sigma n,w)\bigg]\sigma^2\la v-w,n\ra_+\d n \d w,
\end{multline}
the pre-collision velocities $(v',w')$ are given by \eqref{pre-to-post collision transformation}.

We have to say that it seems nonsensical to define Standard Enskog collision integral as a bilinear functional of distribution functions, since one always assumes that the correlation function $\chi$ depends on one of the distribution function $f$. In that case, the two functions don't share the symmetry structure anymore. Here, we write the collision integral in this way only for mathematical convenience.  

In addition to Standard Enskog equation, some people have also proposed a Revised Enskog equation. It has more mathematical structure. For example, it has the symmetry between two colliding particles, and it is more easily generalized to the case of mixtures. (The standard Enskog equation can also be generated to mixture case, see Chapter 16 of \cite{chapman1990} for instance.)
The Revised Enskog collision operator is
          \begin{align*}\label{Revised Enskog collision integral}
           \Tilde{E}[f,g]:=&\int_{\R^3\times\S^2}\bigg( Y\big(\rho_f(x),\rho_g(x-\sigma n)\big)f(x,v')g(x-\sigma n,w')\nonumber\\
           &-Y\big(\rho_f(x),\rho_g(x+\sigma n)\big)f(x,v)g(x+\sigma n,w)\bigg)\la n,v-w\ra_{+}\d w\d n,
           \end{align*}
           where the correlation function $Y$ satisfies $Y(x,y)=Y(y,x)$.
           The Enskog equation is a generalized Boltzmann equation aimed at modelling dense gases. When we formally derive the Boltzmann equation from Liouville equation,
            we no longer consider the particles as mass points. We distinguish between the collision point and the particle center. Its advantage is to describe two different kinds of particles collisions.
           The Enskog equation was first proposed by David Enskog in 1922 \cite{Enskog1922}. \cite{VANBEIJEREN1973225}\cite{VANBEIJEREN1973437} proposed the modified Enskog equation. 
           More research has been conducted on the revised Enskog equation due to its improved mathematical structure, particularly its symmetry, which makes it easier to establish properties such as the H-theorem \cite{Resibois1978}, and consistents with irreversible thermodynamics-- leads to the correct equilibrium \cite{VanBeijeren1983}. Nevertheless, the standard Enskog equation remains equally valuable for study because of its more intuitive physical interpretation. Specifically, the correlation function in the standard Enskog equation depends on the ratio:
\[
\frac{\text{Impact of Volume Occupation}}{\text{Shielding Influence}}.
\]
Hence, we have a more explicit formula for analyzing the equation. For example, asymptotically, one can assume $\chi(t,x)=1+\frac{5}{8}\frac{2}{3}\pi\sigma^3\rho_f(t,x)$ where $\rho_f(t,x):=\int_{\R^3}f(t,x,v)\d v$: see for instance Chapter 16 in \cite{chapman1990}.

In the following discussion, we focus on the Standard Enskog model. For details regarding the revised/modified Enskog model, is referred to \cite{CCG2024}. It is important to emphasize that the results presented in \cite{CCG2024} do not include those in Section \ref{sec:Main result}, despite the similarities in the approaches. This is because, in the proofs of \cite{CCG2024}, the collision integral $\mathrm{St}[\cdot,\cdot]$ is assumed to be in a symmetric form, i.e., $\mathrm{St}[f,g] = \mathrm{St}[g,f]$.

\subsection{The Povzner equation}
The Povzner collision integral was proposed to study the delocalized collision by Povzner in \cite{povzner1965}. The collision integral is of the following form
          \begin{multline}\label{Povzner collision integral}
            P[f,g]:=\int_{\R^3\times \R^3}\bigg( f(x,v')g(y,w')-f(x,v)g(y,w)\bigg)J(x-y,v-w)\d w\d y ,
          \end{multline}
          where the pre-collision velocities $(v',w')$ are also given by \eqref{pre-to-post collision transformation} with $n=\nyx$. $J(\xi,v)$ is assumed to satisfy the following four assumptions as showed in \cite{povzner1965}
          \begin{enumerate}
              \item[(i)] $J$ is continuous, and $|J(\xi,v)|\lesssim |v|+1$.
              \item[(ii)] Time reversibility: 
              \begin{equation}\label{Povzner collision kernel is time reversible}
                  J(-\xi,-v)=J(\xi,v).
              \end{equation}
              \item[(iii)] Invariance after collision: 
              \begin{equation}\label{Povzner kernel is invariant after collision}J(\xi,v'-w')=J(\xi,v-w).\end{equation}
              \item[(iv)] Short-range interaction: There exists a constant $R$ such that $J(\xi,v)=0$ for $|\xi|>R$.
          \end{enumerate}
          An example of Povzner kernel in \cite{FORNASIER2011} is 
          \[
          J(\xi,v):=\frac{1}{2\delta^3}\frac{1}{\theta}H(\delta-|\xi|)H(\theta-|v|)\left|v\cdot\frac{\xi}{|\xi|}\right|,
          \]
          where $H$ is the Heaviside step function.
          From this example, we can find out more clearly that the Povzner collision integral allows for the possibility of a spatial smearing process with range of $\theta$ and $\delta$ of collisions as well as for the classically assumed process in which the molecules collide at a point.
          We point out here that if we set $J(\xi,v)=\Tilde{J}(|v|,\frac{\la \xi,v\ra}{|\xi||v|})\delta(|\xi|)$ in \eqref{Povzner collision integral}, then we can recover the Boltzmann collision kernel. 
         However, the Povzner collision integral generalizes the following form of the Boltzmann collision integral:

\[
\frac{1}{2}\int_{\R^3 \times \S^2} [f(x,v')f(x,w') - f(x,v)f(x,w)] |\langle v-w, n \rangle|  \d n \d w,
\]

which corresponds to \eqref{boltzmann collision operator} after changing $n$ into $-n$ and symmetrizing \eqref{boltzmann collision operator}. In the Boltzmann collision operator, the direction of the unit vector $n$ is inconsequential, as it cancels out when taking the Boltzmann-Grad limit. However, in the case of dense gases, the locations of the gain term and the loss term differ. Consequently, a more physically meaningful assumption would be:

\begin{equation}\label{new assumption for Povzner collision kernel}
J(\xi, v'-w') = J(-\xi, v-w),
\end{equation}

instead of \eqref{Povzner kernel is invariant after collision}. Nevertheless, in the following discussion, we adhere to Povzner's original assumption \eqref{Povzner kernel is invariant after collision}. For results derived under the new assumption \eqref{new assumption for Povzner collision kernel}, we refer the reader to \cite{CCG2024}.
\begin{figure}[h]
    \centering
    \includegraphics[width=0.7\textwidth]{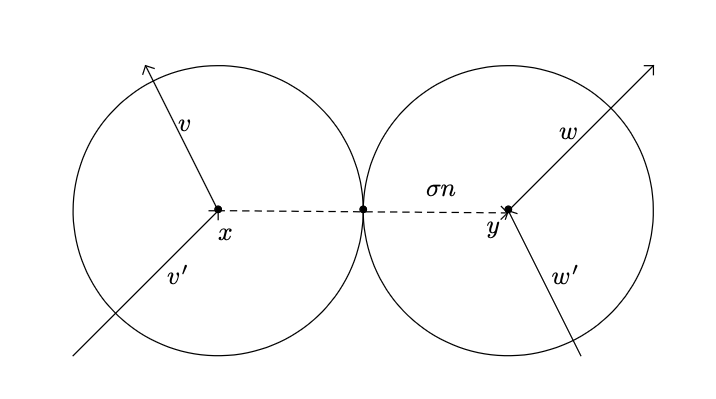}
    \caption{Schematic diagram of ingoing and outgoing processes}
    \label{fig:ingoing outgoing}
\end{figure}

\section{Main results}\label{sec:Main result}
As mentioned in section \ref{sec:collisional kinetic models}, we seek a representation of a form of \eqref{general conservative form}. 
Before stating the main result, we introduce the general Landau mass current
\begin{definition}[Landau mass current for the Standard Enskog collision integral] For each $f,g\in C(\R^3\times\R^3)$ with rapidly decaying, we define
\begin{equation}\label{Landau current for the Standard Enskog collision integral}
    \Jj_0^E[f,g](x,n,v):=\int_{\R^3}\int^{\la v-w,n\ra}_0\chi(x+\frac{1}{2}\sigma n)f(x,v+s n)g(x+\sigma n,w+s n)n\d s\d w
\end{equation}

\end{definition}
\begin{definition}[Landau mass current for the Povzner collision integral] For each $f,g\in C(\R^3\times\R^3)$ with rapidly decay, we define
\begin{multline}\label{Landau current for the Povzner collision integral}
     \Jj_0^P[f,g](x,y,v):=\int_{\R^3}\int^{\la v-w,n\ra}_0f(x,v+s \frac{y-x}{|y-x|})g\left(y,w+s \frac{y-x}{|y-x|}\right)\\
     \times J(x-y,v-w)\frac{y-x}{|y-x|}\d s\d w
\end{multline}

\end{definition}
For Enskog collision integral, we have the following result
\begin{theorem}\label{thm:Enskog}
For each rapidly decaying function $f\in C(\R^3\times \R^3)$, we have 
\begin{align}
     &E[f,f]=\grad_v\cdot \J_0^E[f,f]\label{Mass current for Enskog collision integral}\\
     &E[f,f]v_1=\grad_v\cdot \J_1^E[f,f]+\grad_x\cdot \I_1^E[f,f]\label{Momentum current 1 for Enskog collision integral}\\
     &E[f,f]v_2=\grad_v\cdot \J_2^E[f,f]+\grad_x\cdot \I_2^E[f,f]\label{Momentum current 2 for Enskog collision integral}\\
     &E[f,f]v_3=\grad_v\cdot \J_3^E[f,f]+\grad_x\cdot \I_3^E[f,f]\label{Momentum current 3 for Enskog collision integral}\\
     &E[f,f]|v|^2=\grad_v\cdot \J_4^E[f,f]+\grad_x\cdot \I_4^E[f,f]\label{Energy current for Enskog collision integral}
\end{align}
where vectors fields $\I_k^E(x,v),\J_l^E(x,v)$, $k=1,2,3,4$, $l=0,1,2,3,4$ are defined by
\begin{equation}\label{definition of mass current for Enskog collision integral}
    \J_0^E=\int_{\S^2}\Jj_0^E[f,f](x,n,v)\d n,
\end{equation}
for $k=1,2,3$:
\begin{align}\label{definition of momentum current in v for Enskog collision integral}
    &\J_k^E[f,f]\\\nonumber
    =&\J_0^E[v_kf,f]-\int_{\S^2}\Jj_0^E[\la v,n\ra n_kf,f]\d n+\int_{\S^2}\Jj_0^E[f,\la w,n\ra n_kf]\d n\\\nonumber
    &-\frac{1}{2}\int_{(\R^3)^4}\int^{\pi/2}_0\chi(x-\frac{1}{2}\sigma n)f(x-\sigma n,v_{-\theta})f(x,w_{-\theta})\la v_{-\theta}-w_{-\theta},n\ra_+^2n_k\\\nonumber
    &\qquad\times w\d n\d w\d \theta,
\end{align}
and $l=1,2,3$
\begin{multline} \label{definition of momentum current in x for Enskog collision integral}
    \I_l^E[f,f]=\frac{1}{2} \int_{\R^3\times \S^2}\int^\sigma_0\chi\left(x+(\frac{1}{2}\sigma-s) n\right)f(x-sn,v)f(x+(\sigma-s) n,w)\\
    \qquad\times\la v-w,n\ra_+^2n_ln\d w\d n\d s,
\end{multline}
finally,
\begin{align}\label{definition of energy current in v for Enskog collision integral}
    &\J_4^E[f,f]\\\nonumber
    =&\J_0^E[|v|^2f,f]-\int_{\S^2}\Jj_0^E[\la v,n\ra^2f,f]\d n+\int_{\S^2}\Jj_0^E[f,\la w,n\ra^2f]\d n\\\nonumber
    &-\frac{1}{2}\int_{(\R^3)^4}\int^{\pi/2}_0\chi(x-\frac{1}{2}\sigma n)f(x-\sigma n,v_{-\theta})f(x,w_{-\theta})\la v_{-\theta}-w_{-\theta},n\ra_+^2\\\nonumber
    &\qquad\times\la v_{-\theta}+w_{-\theta},n\ra w\d n\d w\d \theta,
\end{align}
while
\begin{multline} \label{definition of energy current in x for Enskog collision integral}
    \I_4^E[f,f]=\frac{1}{2} \int_{\R^3\times \S^2}\int^\sigma_0\chi\left(x+(\frac{1}{2}\sigma-s) n\right)f(x-sn,v)f(x+(\sigma-s) n,w)\\
    \qquad\times\la v-w,n\ra_+^2\la v+w,n\ra n\d w\d n\d s,
\end{multline}
\end{theorem}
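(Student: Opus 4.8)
The plan is to prove all six identities weakly, by testing against smooth compactly supported functions, and to keep reusing the same two devices: a change of variables in the gain term, and a ``fundamental theorem of calculus plus translation in $v$'' trick. For the momentum and energy currents two further ingredients are needed: a telescoping identity in the auxiliary variable $s$ that produces the $x$-divergences, and a rotation in velocity space that produces the last term of each velocity current.

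\emph{Mass current.} Fix $x$ and test $E[f,f](x,\cdot)$ against $\phi\in C^1_c(\R^3)$. In the gain term apply first the involution $(v,w)\mapsto(v',w')$, which by \eqref{reversible collision} and \eqref{Jacobian of collision} has unit Jacobian and turns $\la v-w,n\ra_+$ into $\la v-w,n\ra_-$, and then $n\mapsto-n$, which by \eqref{collision is even wrsp n} fixes $v'$, restores $\la\cdot,\cdot\ra_+$, and changes $\chi(x-\tfrac12\sigma n),\,f(x-\sigma n,\cdot)$ into $\chi(x+\tfrac12\sigma n),\,f(x+\sigma n,\cdot)$. One obtains
\[
\int E[f,f]\phi\,\d v=\sigma^2\int_{\S^2}\int_{\R^3}\int_{\R^3}\chi(x+\tfrac12\sigma n)f(x,v)f(x+\sigma n,w)\la v-w,n\ra_+\bigl(\phi(v')-\phi(v)\bigr)\d v\,\d w\,\d n .
\]
Since $v'=v-n\la n,v-w\ra$, on the support of the kernel $\phi(v')-\phi(v)=-\int_0^{\la n,v-w\ra}(n\cdot\grad\phi)(v-sn)\,\d s$; substituting $(v,w)\mapsto(v+sn,w+sn)$, under which $v-w$ (hence the kernel and the $s$-range) is invariant, moves the shift onto the two copies of $f$ and yields $\int E[f,f]\phi\,\d v=-\int\grad\phi\cdot\J_0^E[f,f]\,\d v$, which is \eqref{Mass current for Enskog collision integral}. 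The identical computation gives $E[f,g]=\grad_v\cdot\J_0^E[f,g]$ for all rapidly decaying $f,g$.

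\emph{Momentum and energy.} Test $E[f,f]v_k$ (resp.\ $E[f,f]|v|^2$) against $\psi\in C^1_c(\R^3\times\R^3)$, i.e.\ run the previous argument with ``test function'' $v_k\psi$ (resp.\ $|v|^2\psi$). Using $v_k'=v_k-n_k\la n,v-w\ra$ (resp.\ $|v'|^2=|v|^2-\la n,v-w\ra\la n,v+w\ra$), the bracket $v_k'\psi(x,v')-v_k\psi(x,v)$ splits into three pieces: $v_k\bigl(\psi(x,v')-\psi(x,v)\bigr)$, $-n_k\la n,v-w\ra\bigl(\psi(x,v')-\psi(x,v)\bigr)$, and $-n_k\la n,v-w\ra\psi(x,v)$. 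The first two are increments and are treated exactly as in the mass case (FTC along $v-sn$, then the shift $(v,w)\mapsto(v+sn,w+sn)$): the first gives the current $\J_0^E[v_kf,f]$; the second, once the $s$-shifted integrand is recognized via $\la v+sn,n\ra-\la w+sn,n\ra=\la v-w,n\ra$, gives $-\int_{\S^2}\Jj_0^E[\la v,n\ra n_kf,f]\,\d n+\int_{\S^2}\Jj_0^E[f,\la w,n\ra n_kf]\,\d n$. The third piece is the genuine obstruction: it tests against $-g_k(x,v)$ with $g_k(x,v):=\sigma^2\int_{\S^2}\int_{\R^3}\chi(x+\tfrac12\sigma n)f(x,v)f(x+\sigma n,w)\la v-w,n\ra_+^2 n_k\,\d w\,\d n$ (for the energy the scalar $\la n,v+w\ra$ is carried through every step, becoming $\la n,v+w+2sn\ra$ after the shift, which is what produces the $\la v,n\ra^2,\la w,n\ra^2$ terms of $\J_4^E$).

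\emph{The crux.} It remains to write $g_k=\grad_x\cdot\I_k^E+\grad_v\cdot(\text{fourth term of }\J_k^E)$. For the $x$-part, with $\Phi(x,s):=\chi(x+(\tfrac12\sigma-s)n)f(x-sn,v)f(x+(\sigma-s)n,w)$ every argument is translated along $n$ at unit speed as $s$ grows, so $(n\cdot\grad_x)\Phi=-\partial_s\Phi$; hence $\grad_x\cdot\I_k^E$ telescopes in $s$ to $\tfrac12 g_k-\tfrac12 G_k$, where $G_k(x,v):=\sigma^2\int_{\S^2}\int_{\R^3}\chi(x-\tfrac12\sigma n)f(x-\sigma n,v)f(x,w)\la v-w,n\ra_+^2 n_k\,\d w\,\d n$ (same for $\I_4^E$). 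Thus one is reduced to showing $\tfrac12(g_k+G_k)$ is a \emph{pure} $v$-divergence, and this is where the main obstacle lies, since neither $g_k$ nor $G_k$ is visibly a divergence. The mechanism is geometric: for fixed $w,n$ the points $v$ and $v'=v-n\la n,v-w\ra$ lie on the same sphere about $\tfrac12(v+w)$ and are exchanged by the reflection across the hyperplane through $\tfrac12(v+w)$ orthogonal to $n$; joining them by a one-parameter family of rotations $v_{-\theta}$, $\theta\in[0,\pi/2]$ (with $w_{-\theta}$ the mirror motion, so $v_{-\theta}+w_{-\theta}=v+w$) and rewriting $\la v-w,n\ra_{\pm}^2 n_k$ as a $\theta$-integral of $\grad_v$-derivatives of $\chi\,f\,f$ along this family, one turns $\tfrac12(g_k+G_k)$ into $\grad_v$ of the rotated-velocity integral displayed as the fourth term of $\J_k^E$ (and likewise for $\J_4^E$). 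Pinning down this rotation—the axis, the range $[0,\pi/2]$, and the change of variables that fuses the $\la\cdot\ra_+$ and $\la\cdot\ra_-$ contributions into the stated closed form—is the delicate point; every other step is a disciplined iteration of the mass-current argument. Collecting the four pieces of $\grad_v\cdot\J_k^E$ together with $\grad_x\cdot\I_k^E$ reproduces exactly $\int\int E[f,f]v_k\psi$, giving \eqref{Momentum current 1 for Enskog collision integral}–\eqref{Momentum current 3 for Enskog collision integral}, and the parallel bookkeeping with $|v|^2$ gives \eqref{Energy current for Enskog collision integral}.
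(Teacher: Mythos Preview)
Your treatment of the mass current and of the ``increment'' pieces $\phi_1,\phi_2,\phi_3$ of the momentum/energy brackets is essentially the paper's proof; even your telescoping computation of $\nabla_x\cdot\I_k^E$ via $n\cdot\nabla_x\Phi=-\partial_s\Phi$ is the strong-form twin of the paper's weak-form FTC along $x+sn$ for the piece $\phi_{41}=\psi(x,v)-\psi(x+\sigma n,v)$. Up to that point the two arguments coincide.

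The genuine gap is in your handling of the rotation, i.e.\ the ``fourth term'' of $\J_k^E$. Your geometric picture is wrong on two counts. First, the role of the rotation is \emph{not} to connect $v$ to $v'$: after the symmetrization $(v,w,x,n)\mapsto(w,v,x+\sigma n,-n)$ the obstruction is carried by $\psi(x+\sigma n,v)-\psi(x+\sigma n,w)$, a difference at the \emph{same} spatial point with \emph{velocity} arguments $v$ and $w$, and one needs a path from $v$ to $w$ (not to $v'$) in order to produce a $\nabla_v\psi$. Second, the rotation actually used, $(v_\theta,w_\theta)=(v\cos\theta+w\sin\theta,\,-v\sin\theta+w\cos\theta)$, acts in the six-dimensional $(v,w)$-space and sends $(v,w)$ to $(w,-v)$ at $\theta=\pi/2$; it does \emph{not} preserve $v+w$ (your claim $v_{-\theta}+w_{-\theta}=v+w$ is false), and it has nothing to do with the sphere about $\tfrac12(v+w)$ or the reflection $v\mapsto v'$. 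The correct mechanism is simply $\frac{d}{d\theta}\psi(\cdot,v_\theta)=\langle w_\theta,\nabla_v\psi(\cdot,v_\theta)\rangle$, followed by the change of variables $(v,w)\mapsto(v_{-\theta},w_{-\theta})$ and $x\mapsto x-\sigma n$ to bring $\nabla_v\psi$ back to $(x,v)$; the vector $w$ that survives is the $w$ appearing as the current direction in the fourth term of $\J_k^E$.

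There is also a sign slip in your reduction: the obstruction contributes $-g_k$, not $g_k$, to $E[f,f]v_k$, so the identity you must verify is $-g_k=\nabla_x\cdot\I_k^E+\nabla_v\cdot(\text{fourth term})$; with the correct rotation this closes (both sides equal $-\tfrac12(g_k+G_k)+\tfrac12(g_k-G_k)$ in the weak form), but your stated target ``$g_k=\ldots$'' does not. In short: replace your sphere/reflection heuristic by the $(v,w)$-plane rotation from $v$ to $w$, and the remainder of your outline goes through exactly as in the paper.
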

For Povzner collision integral, we have the following parallel result
\begin{theorem}\label{thm:Povzner}
For each rapidly decaying function $f\in C(\R^3\times \R^3)$, we have 
\begin{align}
     &P[f,f]=\grad_v\cdot \J_0^P[f,f]\label{Mass current for Povzner collision integral}\\
     &P[f,f]v_1=\grad_v\cdot \J_1^P[f,f]+\grad_x\cdot \I_1^P[f,f]\label{Momentum current 1 for Povzner collision integral}\\
     &P[f,f]v_2=\grad_v\cdot \J_2^P[f,f]+\grad_x\cdot \I_2^P[f,f]\label{Momentum current 2 for Povzner collision integral}\\
     &P[f,f]v_3=\grad_v\cdot \J_3^P[f,f]+\grad_x\cdot \I_3^P[f,f]\label{Momentum current 3 for Povzner collision integral}\\
     &P[f,f]|v|^2=\grad_v\cdot \J_4^P[f,f]+\grad_x\cdot \I_4^P[f,f]\label{Energy current for Povzner collision integral}
\end{align}
where vector fields$\I_k^P(x,v),\J_l^P(x,v)$, $k=1,2,3,4$, $l=0,1,2,3,4$ are defined by
\begin{equation}\label{definition of mass current for Povzner collision integral}
    \J_0^P=\int_{\R^3}\Jj_0^P[f,f](x,y,v)\d y,
\end{equation}
for $k=1,2,3$:
\begin{align}\label{definition of momentum current in v for Povzner collision integral}
    &\J_0^P[v_if,f]\\\nonumber
    &-\int_{\R^3}\Jj_0^P\left[\lla v,\nyx\rra \left(\nyx\right)_if,f\right]\d y\\\nonumber
    &+\int_{\R^3}\Jj_0^P\left[f,\lla w,\nyx\rra \left(\nyx\right)_if\right]\d y\\\nonumber
    &-\frac{1}{2}\int_{(\R^3)^4}\int^{\pi/2}_0f(x_{-\theta},v_{-\theta})f(y_{-\theta},w_{-\theta})J(x_{-\theta}-y_{-\theta},v_{-\theta}-w_{-\theta})  \\\nonumber
    &\qquad\qquad\times \lla v-w,\nyx\rra\left(\frac{x_{-\theta}-y_{-\theta}}{|x-y|}\right)_iw\d n\d w\d \theta,
\end{align}
and $l=1,2,3$
\begin{multline} \label{definition of momentum current in x for Povzner collision integral}
    \I_l^P[f,f]=-\frac{1}{2}\int_{(\R^3)^4}\int^{\pi/2}_0f(x_{-\theta},v_{-\theta})f(y_{-\theta},w_{-\theta})J(x_{-\theta}-y_{-\theta},v_{-\theta}-w_{-\theta})\\
    \qquad\qquad\times \lla v-w,\nyx\rra\left(\frac{x_{-\theta}-y_{-\theta}}{|x-y|}\right)_iy\d n\d w\d \theta,
\end{multline}
finally,
\begin{align}\label{definition of energy current in v for Povzner collision integral}
    &\J_4^P[f,f]\\\nonumber
    =&\J_0^P[|v|^2f,f]-\int_{\R^3}\Jj_0^P[\lla v,\nyx\rra^2f,f]\d y+\int_{\R^3}\Jj_0^P[f,\lla w,\nyx\rra^2f]\d y\\\nonumber
    &-\frac{1}{2}\int_{(\R^3)^2}\int^{\pi/2}_0f(x_{-\theta},v_{-\theta})f(y_{-\theta},w_{-\theta})J(x_{-\theta}-y_{-\theta},v_{-\theta}-w_{-\theta})\\
    &\qquad\times \lla v-w,\nyx\rra \lla v+w,\nyx\rra w\d y\d w\d \theta,
\end{align}
while
\begin{multline} \label{definition of energy current in x for Povzner collision integral}
    \I_4^P[f,f]= -\frac{1}{2}\int_{(\R^3)^2}\int^{\pi/2}_0f(x_{-\theta},v_{-\theta})f(y_{-\theta},w_{-\theta})J(x_{-\theta}-y_{-\theta},v_{-\theta}-w_{-\theta})\\
    \times \lla v-w,\nyx\rra \lla v+w,\nyx\rra y\d y\d w\d \theta,
\end{multline}
\end{theorem}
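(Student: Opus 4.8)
The plan is to derive all of \eqref{Mass current for Povzner collision integral}--\eqref{Energy current for Povzner collision integral} from a single weak-formulation identity, in parallel with the Enskog case. First I would write down the weak form of $P[f,f]$ against a test function $\varphi(v)$: multiplying \eqref{Povzner collision integral} by $\varphi(v)$ and integrating in $v$, then using time-reversibility \eqref{Povzner collision kernel is time reversible}, the collision invariance \eqref{Povzner kernel is invariant after collision}, the Jacobian \eqref{Jacobian of collision}, and the symmetry/reversibility relations \eqref{reversible collision}--\eqref{post-collision inverses the direction} applied with $n=\tfrac{y-x}{|y-x|}$, one obtains an expression of the schematic form
\[
\int_{\R^3}P[f,f]\varphi\,\d v=\int_{(\R^3)^3}\bigl(\varphi(v')-\varphi(v)\bigr)f(x,v)f(y,w)J(x-y,v-w)\,\d w\,\d y\,\d v,
\]
up to the usual care with the fact that $\chi$, resp.\ $J(x-y,\cdot)$, is evaluated at shifted spatial arguments and does \emph{not} enjoy the $f\leftrightarrow g$ symmetry --- this is exactly the point flagged after \eqref{Revised Enskog collision integral} and the reason the results of \cite{CCG2024} do not apply. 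The key algebraic device is the Landau-type identity $\varphi(v')-\varphi(v)=\int_0^{\la v-w,n\ra}\frac{\d}{\d s}\varphi(v+sn)\,\d s\cdot(\text{appropriate factor})$ combined with a change of variables rotating $(v,w)$ by an angle $\theta\in[0,\pi/2]$ in the plane spanned by $v-w$ and $n$ (the $v_{-\theta},w_{-\theta}$ notation in the statement), which is the standard trick turning the difference of gain and loss contributions into a $v$-divergence plus, when the spatial arguments of the two factors differ, an $x$-divergence.

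Concretely I would treat the five cases by choosing $\varphi=1$, $\varphi=v_i$ ($i=1,2,3$), $\varphi=|v|^2$. For $\varphi=1$ the computation collapses directly to $\grad_v\cdot\J_0^P[f,f]$ with $\J_0^P$ as in \eqref{definition of mass current for Povzner collision integral}, since $\frac{\d}{\d s}\varphi(v+sn)=0$ is not what happens --- rather one writes $1$ trivially and integrates the flux-divergence structure coming from $\partial_s$ acting on $f(x,v+sn)g(y,w+sn)$; this reproduces Villani's argument localized at each pair $(x,y)$. For $\varphi=v_i$, expanding $v_i'=v_i-n_i\la n,v-w\ra$ produces three groups: a term that is $\J_0^P$ evaluated at $(v_if,f)$, the two ``correction'' terms involving $\la v,n\ra n_i$ and $\la w,n\ra n_i$, and a genuinely quadratic-in-$\la v-w,n\ra$ remainder. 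The remainder is where the spatial discrepancy $x\neq y$ forces the split: writing $f(x,v)f(y,w)$ and symmetrizing in the collision-sphere variable, the factor multiplying $\la v-w,n\ra^2 n_i$ can be decomposed as a $v$-gradient contribution (the last line of \eqref{definition of momentum current in v for Povzner collision integral}) plus an $x$-gradient contribution (giving \eqref{definition of momentum current in x for Povzner collision integral}), using the elementary identity $y=x-(x-y)$ and that $\grad_x$ and $\grad_v$ act on the same rotated profile. The $\varphi=|v|^2$ case is formally identical with $n_i$ replaced by $\la v+w,n\ra$ and yields \eqref{definition of energy current in v for Povzner collision integral}--\eqref{definition of energy current in x for Povzner collision integral}.

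The main obstacle I anticipate is \emph{not} the divergence bookkeeping but the careful justification of the changes of variables in the Povzner setting, where $n=\frac{y-x}{|y-x|}$ is itself a function of the integration variables $x,y$: rotating $(v,w)\mapsto(v_\theta,w_\theta)$ around the axis $n$ is clean, but one must check that the short-range assumption (iv) and the growth bound (i) make every interchange of integrals (in $s$, $\theta$, $w$, $y$) and every boundary term at $s=0$, $s=\la v-w,n\ra$, $\theta=0$, $\theta=\pi/2$ legitimate for rapidly decaying continuous $f$ --- in particular that the boundary terms of the $\theta$-integration vanish, which is what makes the gain term (rotation by $+\pi/2$ maps the loss configuration to the gain configuration) and the loss term combine rather than merely cancel globally. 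A secondary technical point is keeping track of the asymmetry: because $J(x-y,\cdot)$ sits with $f(x,\cdot)$ on one side and $g=f$ on the other, the symmetrization $w\leftrightarrow v$ that one would like to use is unavailable, so the ``$w$'' appearing as a vector weight in \eqref{definition of momentum current in v for Povzner collision integral} and \eqref{definition of momentum current in x for Povzner collision integral} must be produced by an explicit integration by parts in $w$ rather than by a symmetry argument; verifying that this integration by parts is exactly what converts the leftover quadratic term into the stated currents is the crux. Once these analytic points are secured, collecting terms gives precisely the currents displayed in the theorem.
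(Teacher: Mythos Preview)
Your outline has the right skeleton --- weak formulation, the $s$-parametrization $\phi(x,v')-\phi(x,v)=-\int_0^{\la v-w,n\ra}\la n,\grad_v\phi(x,v-sn)\ra\,\d s$, and a four-term split of $v_i'\phi(x,v')-v_i\phi(x,v)$ --- but two of your key mechanisms are misidentified, and without the correct ones the $\phi_4$ remainder will not collapse into the currents stated in the theorem.

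First, the transformation $(v_\theta,w_\theta)$ is \emph{not} a rotation ``in the plane spanned by $v-w$ and $n$'' or ``around the axis $n$''. In the paper's notation it is the $2\times 2$ block rotation $(v_\theta,w_\theta)=(\cos\theta\,v+\sin\theta\,w,\,-\sin\theta\,v+\cos\theta\,w)$ mixing the two velocity \emph{vectors} as a pair, and in the Povzner proof the \emph{same} rotation is applied simultaneously to the position pair $(x,y)\mapsto(x_\theta,y_\theta)$. This joint rotation is what yields the interpolation identity
\[
\phi(x,v)-\phi(y,w)=-\int_0^{\pi/2}\bigl[\la y_\theta,\grad_x\phi(x_\theta,v_\theta)\ra+\la w_\theta,\grad_v\phi(x_\theta,v_\theta)\ra\bigr]\d\theta,
\]
and it is the $\theta$-derivative that manufactures the vector weights $y$ and $w$ in $\I_l^P$ and $\J_l^P$ --- not an integration by parts in $w$, as you propose. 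Your suggestion to use ``$y=x-(x-y)$'' would lead to a translation-in-$x$ integral in the style of the Enskog current $\I_l^E$, not the $\theta$-rotation integrals that actually appear in $\I_l^P$.

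Second, you assert that the symmetrization $v\leftrightarrow w$ is ``unavailable'' because $J$ breaks the $f\leftrightarrow g$ symmetry. In fact it \emph{is} available and is precisely what the paper uses on the $\phi_4$ term: the change $(v,w,x,y)\mapsto(w,v,y,x)$ together with time-reversibility $J(-\xi,-u)=J(\xi,u)$ turns $\int(\cdots)\phi(x,v)$ into $-\int(\cdots)\phi(y,w)$, and averaging gives the symmetric form $\tfrac12\int(\cdots)[\phi(x,v)-\phi(y,w)]$ to which the rotation identity above applies. Without this symmetrization step you cannot reach the difference $\phi(x,v)-\phi(y,w)$, and the split into $\grad_x$ and $\grad_v$ pieces does not emerge.

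A smaller but genuine point: to prove the divergence identities (not merely the integrated conservation laws) you must test against a general $\phi(x,v)$, respectively $v_i\phi(x,v)$ and $|v|^2\phi(x,v)$, rather than setting $\varphi=1,v_i,|v|^2$; your own parenthetical about ``$\frac{\d}{\d s}\varphi(v+sn)=0$'' signals that you noticed this tension but did not resolve it.
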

\section{Application to local conservation laws and H theorem}\label{sec:Application}
 To build up the connection with macroscopic equation, we need to define some macroscopic quantities
\begin{equation*}
  \begin{cases}
    \rho_f(t,x) := \int_{\mathbb{R}^3} f(t,x,v)\, \mathrm{d}v \\[1.5ex]
    u_f(t,x) := \frac{\mathbb{1}_{\rho_f>0}}{\rho_f(t,x)} \int_{\mathbb{R}^3} v f(t,x,v)\, \mathrm{d}v \\[1.5ex]
    P_f(t,x) := \int_{\mathbb{R}^3} \bigl(v-u_f(t,x)\bigr)^{\otimes 2} f(t,x,v)\, \mathrm{d}v \\[1.5ex]
    q_f(t,x) := \tfrac{1}{2} \int_{\mathbb{R}^3} \bigl(v-u_f(t,x)\bigr)\, |v-u_f(t,x)|^2 f(t,x,v)\, \mathrm{d}v,
  \end{cases}
\end{equation*}

where $\rho_f,u_f,P_f$, and $q_f$ represent the mass density, bulk velocity, stress tensor, and  heat-flow of the gas. If $f$ is a classical solution of the Boltzmann equation \eqref{genral kinetic equation} with the collision integral given by \eqref{boltzmann collision operator}, the following macroscopic equations for dilute gases can be derived:
\begin{equation}\label{macroscopic equations for dilute gas}
\begin{cases}
\p_t \rho_f + \grad_x \cdot (\rho_f u_f) = 0, \\
\p_t (\rho_f u_f) + \grad_x \cdot (\rho_f u_f^{\otimes 2}) + \grad_x \cdot (P_f) = 0, \\
\p_t \left(\frac{1}{2} |u_f|^2 + \mathrm{tr} P_f\right) + \grad_x \cdot \left[\left(\frac{1}{2} |u_f|^2 + \mathrm{tr} P_f\right) u_f + P_f \cdot u_f + q_f\right] = 0.
\end{cases}
\end{equation}
However, the situation changes significantly when dealing with the Enskog equation. 
\begin{corollary}
Let $f^E$ be a classical solution of the kinetic equation \eqref{genral kinetic equation} with collision integral \eqref{Standard Enskog collision integral}. Moreover $f^E\in C(\R^3_x\times\R^3_v)$ rapidly decaying with respect to $x$ and $v$. Then 
\begin{equation}\label{macroscopic equations for dense gas}
    \begin{cases}
    \p_t\rho_{f^E}+\grad_x\cdot (\rho_{f^E}u_{f^E})=0\\
    \p_t(\rho_{f^E}u_{f^E})+\grad_x\cdot(\rho_{f^E}u_{f^E}^{\otimes2})+\grad_x\cdot(P_{f^E}-\mathbb{I}^E)=0\\
    \p_t(\frac{1}{2}|u_{f^E}|^2+\mathrm{tr}P_{f^E})\\\qquad+\grad_x\cdot[(\frac{1}{2}|u_{f^E}|^2+\mathrm{tr}P_{f^E})u_{f^E}+P_{f^E}\cdot u_{f^E}+q_{f^E}-\I_4^E]=0,
    \end{cases}
\end{equation}
where $\I^E_i, i=1,2,3$ are components of $\mathbb{I}^E$, namely
\[
\mathbb{I}^E(t,x,v)=\begin{pmatrix}
\I_1^E\\\I_2^E\\\I_3^E
\end{pmatrix}
\]
Correspondingly, we have the similar result for the Povzner equation.
\end{corollary}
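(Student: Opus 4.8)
The plan is to deduce the corollary directly from Theorem~\ref{thm:Enskog} by testing the kinetic equation against the collision invariants $1, v_1, v_2, v_3, |v|^2$ and integrating in $v$ over $\R^3$. First I would multiply $(\p_t+v\cdot\grad_x)f^E = E[f^E,f^E]$ by each of these weights and integrate in $v$. For the weight $1$, the left-hand side gives $\p_t\rho_{f^E}+\grad_x\cdot(\rho_{f^E}u_{f^E})$ after noting $\int v f^E\d v = \rho_{f^E}u_{f^E}$, while the right-hand side is $\int_{\R^3}\grad_v\cdot\J_0^E[f^E,f^E]\d v = 0$ by the divergence theorem (here the rapid decay in $v$ kills the boundary term, and one must check that $\J_0^E$ inherits rapid decay from $f^E$, which follows from its explicit integral formula). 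This yields the continuity equation. For the weights $v_i$, the left-hand side produces $\p_t(\rho_{f^E}u_{f^E,i}) + \grad_x\cdot\int v_i v f^E\d v$; decomposing $v = (v-u_{f^E}) + u_{f^E}$ in the quadratic term gives $\grad_x\cdot(\rho_{f^E}u_{f^E}^{\otimes2})_i + \grad_x\cdot(P_{f^E})_i$. The right-hand side is $\int_{\R^3}\big(\grad_v\cdot\J_i^E + \grad_x\cdot\I_i^E\big)\d v$; the $\grad_v$ term integrates to zero by the divergence theorem, and the $\grad_x$ term commutes with $\int\d v$, leaving $\grad_x\cdot\big(\int_{\R^3}\I_i^E\d v\big)$. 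Identifying $\int_{\R^3}\I_i^E[f^E,f^E]\d v$ with the $i$-th component of $\mathbb{I}^E$ and moving it to the left-hand side gives the momentum equation as stated.

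The energy equation is handled the same way with weight $|v|^2$: the left-hand side gives $\p_t\big(\int\tfrac12|v|^2 f^E\d v\big) + \grad_x\cdot\big(\int\tfrac12 v|v|^2 f^E\d v\big)$ up to the factor conventions, and the standard algebraic identity $\int\tfrac12|v|^2 f\d v = \tfrac12\rho|u|^2 + \tfrac12\mathrm{tr}P$ together with $\int\tfrac12 v|v|^2 f\d v = \big(\tfrac12|u|^2+\tfrac12\mathrm{tr}P\big)\rho u + (P\cdot u) + q$ (after the shift $v\mapsto v-u$) rewrites these in macroscopic variables. One small bookkeeping point: the corollary as displayed writes $\tfrac12|u_{f^E}|^2+\mathrm{tr}P_{f^E}$ rather than the density-weighted $\tfrac12\rho_{f^E}|u_{f^E}|^2+\tfrac12\mathrm{tr}P_{f^E}$, so I would either absorb the $\rho$ and the $\tfrac12$ into the conventions of $P_{f^E}$ and $q_{f^E}$ or note that the formula should be read with the appropriate weights; I would make this consistent with the display in \eqref{macroscopic equations for dilute gas}. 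On the right-hand side, Theorem~\ref{thm:Enskog}, equation~\eqref{Energy current for Enskog collision integral}, gives $\grad_v\cdot\J_4^E + \grad_x\cdot\I_4^E$; again the velocity divergence integrates to zero and $\grad_x\cdot\big(\int_{\R^3}\I_4^E\d v\big)$ survives, producing the $-\I_4^E$ term after relabelling $\I_4^E := \int_{\R^3}\I_4^E[f^E,f^E]\d v$ and transposing to the left.

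For the Povzner case I would repeat the identical argument using Theorem~\ref{thm:Povzner} in place of Theorem~\ref{thm:Enskog}; the structure is the same since the currents $\J_k^P,\I_k^P$ satisfy the same divergence identities, so the macroscopic system has exactly the same form with $\mathbb{I}^E,\I_4^E$ replaced by their Povzner analogues $\mathbb{I}^P,\I_4^P$. The only genuine analytic content, beyond the kinetic-theory bookkeeping, is the justification of the two integration-by-parts steps: that $\int_{\R^3}\grad_v\cdot\J_k\d v$ vanishes and that $\grad_x$ can be pulled out of $\int_{\R^3}\d v$. Both require that $f^E$ and hence the explicit current integrals decay rapidly in $v$ and are smooth enough in $x$ to differentiate under the integral sign; since $f^E$ is assumed classical and rapidly decaying in both variables, and each current is built from finitely many velocity integrals of products of $f^E$ against polynomially bounded kernels (for Enskog, $\chi$ bounded; for Povzner, $|J(\xi,v)|\lesssim|v|+1$ with compact support in $\xi$), this is routine but should be stated. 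I expect the main obstacle to be purely presentational: organizing the velocity-moment identities for $P_f$, $q_f$ and the $\tfrac12$/$\rho$ normalization so that the resulting system matches the displayed form in \eqref{macroscopic equations for dense gas} verbatim, rather than any substantive difficulty.
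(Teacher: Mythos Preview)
Your approach is correct and matches the paper's own argument: the paper simply states that one integrates the identities \eqref{Mass current for Enskog collision integral}--\eqref{Energy current for Enskog collision integral} over $v\in\R^3$, which is exactly your plan of testing the kinetic equation against $1,v_i,|v|^2$ and using Theorem~\ref{thm:Enskog} to convert the collision terms into phase-space divergences whose $\grad_v$ parts vanish after integration. Your additional remarks on the decay needed to justify the integration by parts and on the normalization of the energy moments are reasonable elaborations, but there is no substantive difference in method.
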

To conclude the corollary, we only need to integrate \eqref{Mass current for Enskog collision integral}–\eqref{Energy current for Enskog collision integral} with respect to $v$ over $\mathbb{R}^3$. However, it is important to note that the equations in \eqref{macroscopic equations for dense gas} are not closed. Closure is achieved only after solving the general kinetic equation \eqref{genral kinetic equation} and computing $\mathcal{I}^E_4$, $\mathbb{I}^E$, and $q_{f^E}$. Furthermore, a closure relation is necessary. For example, to derive the Euler system, the distribution function $f$ should approximate a Maxwellian such that $q_{f^E}$ and the anisotropic part of $P_{f^E}$ become negligible, i.e., 

\[
q_{f^E} = \mathbf{0}, \qquad P_{f^E} = p \mathrm{Id}_3.
\]

This reduction allows us to solve the equations with four unknowns ($\rho, u, p$), instead of handling ten unknowns with only four constraints. 

The corollary not only clarifies the influence of molecular volume occupation on macroscopic equations but also highlights a key distinction: the Boltzmann collision integral does not contribute to the stress tensor $P_f$—even a free transport equation can generate a stress tensor. In contrast, the Enskog and Povzner collsion integrals have a direct impact on the pressure tensor. This distinction is significant because the coefficient of pressure depends on the adiabatic index, indicating that the gas governed by equations \eqref{macroscopic equations for dense gas} no longer behaves as a perfect gas. This observation provides an alternative perspective on the limitations of the perfect gas assumption for such systems.

It is straightforward to establish the local entropy inequality, as demonstrated in \cite{CCG2024}, due to the availability of a stronger weak formulation for the Povzner collision integral:
\begin{multline*}
(P[f,f], \phi)_{x,v} = \frac{1}{2} \int_{(\R^3)^4} f(x,v) f(y,w) J(x-y, v-w)\\
\times\big[\phi(x,v') + \phi(y,w') - \phi(x,v) - \phi(y,w)\big]  \d x \d y \d v \d w,
\end{multline*}
as given in equation (10) of \cite{povzner1965}. If we choose $\phi(x,v) = \ln f(x,v)$, the following entropy inequality is obtained:

\begin{multline*}
\dt \int_{\R^3 \times \R^3} f(t,x,v) \ln f(t,x,v) \d x \d v \\
= \frac{1}{2} \int_{(\R^3)^4} f(x,v) f(y,w) J(x-y, v-w) \ln\left(\frac{f(x,v') f(y,w')}{f(x,v) f(y,w)}\right)  \d x \d y \d v \d w \\\leq 0,
\end{multline*}
which follows from the basic inequality: for $a,b>0$, \[a \ln \frac{b}{a} \leq b - a.\]

However, this result is less remarkable for the Povzner equation because the entropy inequality is identical to the original Boltzmann's entropy. As such, we do not discuss it further here.
As for the Standard Enskog equation, the local and global entropy inequality are studied in \cite{Goldstein1997}.

\section{Proof of Theorem \ref{thm:Enskog}}\label{sec:proof of Enskog}
The following proof is based on multiplying the collision kernel by a test function. Before we prove the theorem, we begin with a lemma which will be used repeatedly.

\begin{lemma}\label{lem:Enskog}
Let $f,g\in C(\R^3\times\R^3)$ with rapid decay at infinity. For each test function $\phi\in  C(R^3\times\R^3)$ with at most polynomial growth at infinity,
\begin{multline}
    (E[f,g],\phi)_{x,v}:= \int_{\R^3\times\R^3}E[f,g](x,v)\phi(x,v)\d x\d v\\
     =\int_{(\R^3)^4}\chi(x+\frac{1}{2}\sigma n)f(x,v)g(x+\sigma n,w)\la v-w,n\ra_+\\
     \times[\phi(x,v')-\phi(x,v)]\d x\d n\d v\d w
\end{multline}

\end{lemma}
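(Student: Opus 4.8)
The plan is to establish the weak formulation by a standard ``change of variables'' argument applied to the gain term of $E[f,g]$. Recall that
\begin{equation*}
    E[f,g](x,v)=\int_{\R^3\times\S^2}\Big[\chi(x-\tfrac12\sigma n)f(x,v')g(x-\sigma n,w')-\chi(x+\tfrac12\sigma n)f(x,v)g(x+\sigma n,w)\Big]\sigma^2\la v-w,n\ra_+\d n\d w.
\end{equation*}
Multiplying by $\phi(x,v)$ and integrating in $(x,v)$, the loss term already has exactly the advertised shape, so the whole issue is to rewrite the gain-term contribution
\begin{equation*}
    G:=\int_{(\R^3)^3\times\S^2}\chi(x-\tfrac12\sigma n)f(x,v')g(x-\sigma n,w')\,\sigma^2\la v-w,n\ra_+\,\phi(x,v)\d n\d v\d w\d x
\end{equation*}
in the same form. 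First I would fix $x$ and $n$ and treat $G$ as an integral over $(v,w)\in(\R^3)^2$. On this slice I apply the involutive change of variables $(v,w)\mapsto(v',w')$ given by \eqref{pre-to-post collision transformation}: by \eqref{reversible collision} it is an involution, by \eqref{Jacobian of collision} its Jacobian has modulus $1$, by \eqref{collision conserves momentum and energy}--\eqref{post-collision inverses the direction} it preserves $v-w$ up to the reflection $\la v-w,n\ra\mapsto-\la v-w,n\ra$, hence $\la v-w,n\ra_+$ becomes $\la v'-w',n\ra_-$ — which is why the $\sigma^2\la\cdot,\cdot\ra_+$ weight reappears after one also exploits that the transform is even in $n$, or, more simply, that relabelling the dummy variables $(v,w)\leftrightarrow(v',w')$ sends $v\mapsto v'$, $w\mapsto w'$ and $\la v'-w',n\ra_-\mapsto\la v-w,n\ra_+$. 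Concretely, after the substitution the old $v'$ becomes the new $v$ and the old $v$ becomes the new $v'$, so $f(x,v')\mapsto f(x,v)$, $g(x-\sigma n,w')\mapsto g(x-\sigma n,w)$ and $\phi(x,v)\mapsto\phi(x,v')$.

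Next I would perform the spatial shift $x\mapsto x+\sigma n$ (for fixed $n$), which turns $\chi(x-\tfrac12\sigma n)$ into $\chi(x+\tfrac12\sigma n)$ and $g(x-\sigma n,w)$ into $g(x+\sigma n,w)$, while $f(x,v)$ becomes $f(x+\sigma n,v)$ and $\phi(x,v')$ becomes $\phi(x+\sigma n,v')$. At this point the integrand reads $\chi(x+\tfrac12\sigma n)f(x+\sigma n,v)g(x+\sigma n,w)\,\sigma^2\la v-w,n\ra_+\,\phi(x+\sigma n,v')$ — which is \emph{not yet} the claimed expression, so one more relabelling is needed: I would send $n\mapsto -n$ and simultaneously swap the dummy variables $(v,w)\mapsto(w,v)$. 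Using the symmetry \eqref{symmetry collision}, evenness in $n$ \eqref{collision is even wrsp n}, and the fact that $\la v-w,n\ra_+$ is invariant under the joint swap, the gain term becomes $\int\chi(x-\tfrac12\sigma n)f(x-\sigma n,w)g(x-\sigma n,v)\,\sigma^2\la v-w,n\ra_+\,\phi(x-\sigma n,w')\d n\d v\d w\d x$; comparing this with the loss term shows they combine to give precisely $\chi(x+\tfrac12\sigma n)f(x,v)g(x+\sigma n,w)\la v-w,n\ra_+[\phi(x,v')-\phi(x,v)]$ after one final harmless translation. I would double-check the bookkeeping by verifying the special case $\phi\equiv 1$, where the lemma must return $0$, and $\phi=\phi(v)$, which should reproduce the known weak form of the Boltzmann operator in the limit $\sigma\to 0$.

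The main obstacle is purely combinatorial rather than analytic: because $\chi$ is attached to only one slot and the gain and loss terms live at \emph{different} spatial points ($x-\sigma n$ versus $x+\sigma n$), one cannot simply quote the symmetric Boltzmann computation — the sequence of moves (collision involution in velocities, then a spatial translation by $\sigma n$, then the joint $n\mapsto -n$ and $v\leftrightarrow w$ relabelling) must be carried out in the right order so that every $\chi$-argument, every $g$-argument and the $\la v-w,n\ra_+$ weight land in the correct place simultaneously. All the integrability needed to justify Fubini and the changes of variables is guaranteed by the rapid decay of $f,g$ and the at-most-polynomial growth of $\phi$ together with assumption (i)-type bounds on the kernel $\sigma^2\la v-w,n\ra_+$, so these steps are routine; I would state them briefly and concentrate the write-up on tracking the arguments of $\chi$, $f$, $g$ and $\phi$ through each substitution.
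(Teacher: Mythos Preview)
Your core idea --- apply the collision involution $(v,w)\mapsto(v',w')$ to the gain term --- is exactly what the paper does, and your first step is correct: after this substitution (with $n$ fixed) the gain contribution becomes
\[
\int\chi(x-\tfrac12\sigma n)\,f(x,v)\,g(x-\sigma n,w)\,\sigma^2\la w-v,n\ra_+\,\phi(x,v')\d n\d v\d w\d x.
\]
The problem is everything you do afterwards. The spatial translation $x\mapsto x+\sigma n$ is both unnecessary and wrongly executed: under that shift $g(x-\sigma n,w)$ becomes $g(x,w)$, \emph{not} $g(x+\sigma n,w)$ as you wrote, so the integrand you record at that stage is incorrect, and the subsequent $n\mapsto -n$, $(v,w)\leftrightarrow(w,v)$ relabelling and the vague ``one final harmless translation'' are all compensating for an error rather than completing a valid argument.

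The clean route --- and the one the paper takes --- is simply to do $n\mapsto -n$ immediately after your first step (equivalently, apply the single combined substitution $(v,w,n)\mapsto(v',w',-n)$ to the gain term from the outset). Because the spatial arguments of $\chi$ and $g$ carry the factor $\sigma n$, the sign flip $n\mapsto -n$ already turns $x-\tfrac12\sigma n$ into $x+\tfrac12\sigma n$ and $x-\sigma n$ into $x+\sigma n$; simultaneously $\la w-v,n\ra_+\mapsto\la v-w,n\ra_+$ and $v'(v,w,-n)=v'(v,w,n)$ by evenness, so you land directly on
\[
\int\chi(x+\tfrac12\sigma n)\,f(x,v)\,g(x+\sigma n,w)\,\sigma^2\la v-w,n\ra_+\,\phi(x,v')\d n\d v\d w\d x,
\]
and subtracting the loss term gives the lemma. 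No spatial translation, no $(v,w)$ swap, no residual cleanup.
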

\begin{proof}
For the gain term of the Standard Enskog collision integral defined in \ref{Standard Enskog collision integral}, we apply the transform $(v,w,n)\mapsto(v',w',-n)$. Since the absolute value of the Jacobian determinate is $1$ \eqref{Jacobian of collision}, the gain term becomes
\begin{multline*}
    \int_{(\R^3)^4}\chi(x+\frac{1}{2}\sigma n)f(x,v''(v',w',-n))g(x+\sigma n,w''(v',w',-n))\la v'-w',-n\ra_+\\
    \times\phi(x,v'(v,w,-n))\d x\d n\d v\d w.
\end{multline*}
The properties \eqref{reversible collision}, \eqref{post-collision inverses the direction} and \eqref{collision is even wrsp n} show that this term is equal to
\[
\int_{(\R^3)^4}\chi(x+\frac{1}{2}\sigma n)f(x,v)g(x+\sigma n,w)\la v-w,n\ra_+\phi(x,v'(v,w,n))\d x\d n\d v\d w.
\]
Summing with the loss term leads to the sought result.
\end{proof}
For brevity, $\big(A(x,v),b(x,v)\big)_{x,v}$ is designated to $\int_{\R^3\times\R^3}\la A(x,v),b(x,v)\ra\d x\d v$. 
Indeed, in the standard Enskog equation, the correlation function $\chi$ depends on the distribution function $f$, whereas the following proof does not rely on this dependence. For clarity, throughout the proof, whenever we introduce the bilinear operator for the Enskog collision integral—for instance, $\Jj^E_i[\cdot,\cdot]$ and $\J^E_i[\cdot,\cdot]$—we assume that the function $\chi$ is independent of its functional argument.
\subsection{Mass current for Enskog collision integral: Proof of \eqref{Mass current for Enskog collision integral} }
\begin{proof}
As the first step, we apply the Lemma \ref{lem:Enskog}, then note the following identity
\begin{multline}\label{pf:mass current}
    \phi(x,v')-\phi(x,v)=\int_{0}^{\la v-w,n\ra}\frac{\d}{\d s}\phi(x,v-sn)\d s\\
     =-\int_{0}^{\la v-w,n\ra}\la n,\grad_v\phi(x,v-sn)\ra\d s,
\end{multline}
hence we can write $(E[f,g],\phi)_{x,v}$ as 
\begin{multline*}
-\int_{(\R^3)^3\times\S^2}\int_{0}^{\la v-w,n\ra}\chi(x+\frac{1}{2}\sigma n)f(x,v)g(x+\sigma n,w)\la v-w,n\ra_+ \\
\times\la n,\grad_v\phi(x,v-sn)\ra\d s\d n\d w\d x\d v.
\end{multline*}
Subsequently, we translate $v$ and $w$ along $n$ by $s$, namely
\begin{multline*}
-\int_{(\R^3)^3\times\S^2}\int_{0}^{\la v-w,n\ra}\chi(x+\frac{1}{2}\sigma n)f(x,v+sn)g(x+\sigma n,w+sn)\la v-w,n\ra_+ \\
\times\la n,\grad_v\phi(x,v)\ra\d s\d n\d w\d x\d v.
\end{multline*}
With the notation of the Landau mass current of the Enskog collision integral \eqref{Landau current for the Standard Enskog collision integral}, we may represent $(E[f,g],\phi)_{x,v}$ as
\[
\left(-\int_{\S^2}\Jj_0^E[f,g]\d n,\grad_v\phi(x,v)\right)_{x,v}
\]
We merely need to let $f=g$ to deduce the result in the sense of distributions.
\end{proof}
In fact, we established a stronger result than \eqref{Mass current for Enskog collision integral}, allowing for different distribution functions in the collision integral. We will utilize this stronger result later to express the momentum and energy current.
\subsection{Momentum current for Enskog collision integral: Proof of \eqref{Momentum current 1 for Enskog collision integral}\eqref{Momentum current 2 for Enskog collision integral}\eqref{Momentum current 3 for Enskog collision integral}}
\begin{proof}
Again, we apply Lemma \ref{lem:Enskog} to test function $v_i\phi(x,v)$ for $i=1,2,3$. We reformulate $(E[f,f]v_i,\phi)$ as
\begin{multline*}
    \int_{(\R^3)^3\times\S^2}\chi(x+\frac{1}{2}\sigma n)f(x,v)f(x+\sigma n,w)\la v-w,n\ra_+\\
    \times \big[v'_i\phi(x,v')-v_i\phi(x,v)\big]\d x\d v\d n\d w.
\end{multline*}
Then we split the difference $v'\phi(x,v')-v\phi(x,v)$ into two parts,
\[
v_i'[\phi(x,v')-\phi(x,v)]+[v_i'-v_i]\phi(x,v)
\]
Thereafter, we use the $i$-th component of \eqref{pre-to-post collision transformation} to replace $v_i'$, hence
\begin{multline*}
\underbrace{v_i[\phi(x,v')-\phi(x,v)]}_{=:\phi_1}-\underbrace{\la v,n\ra n_i[\phi(x,v')-\phi(x,v)]}_{=:\phi_2}\\
+\underbrace{\la w,n\ra n_i[\phi(x,v')-\phi(x,v)]}_{=:\phi_3}-\underbrace{\la v-w,n\ra n_i\phi(x,v)}_{=:\phi_4}.
\end{multline*}
We use Lemma \ref{lem:Enskog} for $\phi_1$, then it shows
\begin{multline*}
 \int_{(\R^3)^3\times\S^2}\chi(x+\frac{1}{2}\sigma n)f(x,v)f(x+\sigma n,w)\la v-w,n\ra_+\phi_1\d x\d v\d n\d w\\
 =(E[v_if,f],\phi)_{x,v}.
\end{multline*}
Following this, we use the result \eqref{Mass current for Enskog collision integral} obtained in the last section. It shows that
\begin{multline*}
 \int_{(\R^3)^\times\S^2}\chi(x+\frac{1}{2}\sigma n)f(x,v)f(x+\sigma n,w)\la v-w,n\ra_+\phi_1\d x\d v\d n\d w\\
 =(-\J_0^E[v_if,f],\grad_v\phi)_{x,v}.
\end{multline*}
By the same reasoning, we can deal with $\phi_2$ and $\phi_3$,
\begin{multline*}
 \int_{(\R^3)^3\times\S^2}\chi(x+\frac{1}{2}\sigma n)f(x,v)f(x+\sigma n,w)\la v-w,n\ra_+\phi_2\d x\d v\d n\d w\\
 =\left(-\int_{\S^2}\Jj_0^E[\la v,n\ra n_if,f]\d n,\grad_v\phi\right)_{x,v},
\end{multline*}
and
\begin{multline*}
 \int_{(\R^3)^3\times\S^2}\chi(x+\frac{1}{2}\sigma n)f(x,v)f(x+\sigma n,w)\la v-w,n\ra_+\phi_3\d x\d v\d n\d w\\
 =\left(-\int_{\S^2}\Jj_0^E[f,\la w,n\ra n_if]\d n,\grad_v\phi\right)_{x,v}.
\end{multline*}
Regarding $\phi_4$, we change of variables from $(v,w,x,n)$ to $(w,v,x+\sigma n.-n)$,
\begin{multline*}
 \int_{(\R^3)^3\times\S^2}\chi(x+\frac{1}{2}\sigma n)f(x,v)f(x+\sigma n,w)\la v-w,n\ra_+\phi_4\d x\d v\d n\d w\\
 = -\int_{(\R^3)^3\times\S^2}\chi(x+\frac{1}{2}\sigma n)f(x,v)f(x+\sigma n,w)\la v-w,n\ra_+^2n_i\phi(x+\sigma n,w)\\\d x\d v\d n\d w.
\end{multline*}
Therefore, we symmetrize it,
\begin{multline*}
 \int_{(\R^3)^3\times\S^2}\chi(x+\frac{1}{2}\sigma n)f(x,v)f(x+\sigma n,w)\la v-w,n\ra_+\phi_4\d x\d v\d n\d w\\
 = \frac{1}{2}\int_{(\R^3)^3\times\S^2}\chi(x+\frac{1}{2}\sigma n)f(x,v)f(x+\sigma n,w)\la v-w,n\ra_+^2n_i\\
 \times \big[\phi(x,v)-\phi(x+\sigma n,w)\big]\d x\d v\d n\d w.
\end{multline*}
Subsequently, we decompose $\phi(x,v)-\phi(x+\sigma n, w)$ into
\[
\underbrace{\phi(x,v)-\phi(x+\sigma n, v)}_{=:\phi_{41}}+\underbrace{\phi(x+\sigma n,v)-\phi(x+\sigma n, w)}_{=:\phi_{42}}.
\]
Consider the following two identities
\begin{equation}\label{pf translation in x Enskog}
\phi(x,v)-\phi(x+\sigma n, v)=\int_{0}^\sigma\frac{\d}{\d s}\phi(x+sn,v)\d s=\int_0^\sigma \la n,\grad_x\phi(x+sn,v)\ra\d s,
\end{equation}
and
\begin{multline}\label{pf rotation in v Enskog}
\phi(x+\sigma n,v)-\phi(x+\sigma n, w)=-\int_{0}^{\pi/2}\frac{\d}{\d \theta}\phi(x+\sigma n,v_\theta)\d \theta\\
=-\int_0^{\pi/2} \la w_\theta,\grad_v\phi(x+\sigma n,v_\theta)\ra\d \theta,    
\end{multline}
where $(v_\theta,w_\theta)$ are defined by
\begin{equation}\label{definition of rotation in velocities}
    \begin{pmatrix}
    v_\theta\\
    w_\theta
    \end{pmatrix}
    :=\begin{pmatrix}
    \cos \theta & \sin \theta\\
    -\sin \theta & \cos \theta
    \end{pmatrix}
    \begin{pmatrix}
    v\\
    w
    \end{pmatrix}.
\end{equation}
 since 
\[
\frac{\d}{\d\theta}\phi(x,v_\theta)=\la w_\theta,\grad_v\phi(x,v_\theta)\ra.
\]
The reason for choosing these variables is to ensure that the determinant of the Jacobian equals $1$, namely,
\[
\det\left(\frac{\p(v_\theta,w_\theta)}{\p(v,w)}\right)=1.
\]
Hence we plug \eqref{pf translation in x Enskog} into $\phi_{41}$ term,
\begin{multline*}
    \int_{(\R^3)^3\times\S^2}\chi(x+\frac{1}{2}\sigma n)f(x,v)f(x+\sigma n,w)\la v-w,n\ra_+^2n_i\phi_{41}\d x\d v\d n\d w\\
    = \int_{(\R^3)^3\times\S^2}\int^\sigma_0\chi(x+\frac{1}{2}\sigma n)f(x,v)f(x+\sigma n,w)\la v-w,n\ra_+^2n_i\\
    \times\la n,\grad_x\phi(x+sn,v)\ra\d x\d v\d n\d w\d s.
\end{multline*}
Then we replace $x$ by $x-sn$, it becomes
\begin{multline*}
    \int_{(\R^3)^3\times\S^2}\chi(x+\frac{1}{2}\sigma n)f(x,v)f(x+\sigma n,w)\la v-w,n\ra_+^2n_i\phi_{41}\d x\d v\d n\d w\\
    = \left(\int_{\R^3\times \S^2}\int^\sigma_0\chi(x+(\frac{1}{2}\sigma-s) n)f(x-sn,v)f(x+(\sigma-s) n,w)\right.\\
    \times\la v-w,n\ra_+^2n_in\d w\d n\d s,\grad_x\phi(x,v)\bigg)_{x,v}.
\end{multline*}
In an analogous fashion, we can substitute \eqref{pf rotation in v Enskog} into $\phi_{42}$ term,
\begin{multline*}
    \int_{(\R^3)^3\times\S^2}\chi(x+\frac{1}{2}\sigma n)f(x,v)f(x+\sigma n,w)\la v-w,n\ra_+^2n_i\phi_{42}\d x\d v\d n\d w\\
    = - \int_{(\R^3)^3\times\S^2}\int^{\pi/2}_0\chi(x+\frac{1}{2}\sigma n)f(x,v)f(x+\sigma n,w)\la v-w,n\ra_+^2n_i\\
    \times\la w_\theta,\grad_v\phi(x+\sigma n,v_\theta)\ra\d x\d v\d n\d w\d \theta.
\end{multline*}
Then we rotate the velocities by $-\theta$, and translate $x$ along $n$ by $\sigma$,
\begin{multline*}
    \int_{(\R^3)^3\times\S^2}\chi(x+\frac{1}{2}\sigma n)f(x,v)f(x+\sigma n,w)\la v-w,n\ra_+^2n_i\phi_{42}\d x\d v\d n\d w\\
    = \bigg(-\int_{(\R^3)^3\times\S^2}\int^{\pi/2}_0\chi(x-\frac{1}{2}\sigma n)f(x-\sigma n,v_{-\theta})f(x,w_{-\theta})\la v_{-\theta}-w_{-\theta},n\ra_+^2n_i\\
    \times w\d n\d w\d \theta,\grad_v\phi(x,v)\bigg)_{x,v}.
\end{multline*}
For $i=1,2,3 $, sum them all together, we arrive the weak formulation of \eqref{Momentum current 1 for Enskog collision integral}-\eqref{Momentum current 3 for Enskog collision integral}:
\begin{align*}
    &\left(v_iE[f,f](x,v),\phi(x,v)\right)_{x,v}=(-\J_0^E[v_if,f],\grad_v\phi)_{x,v}-\left(-\int_{\S^2}\Jj_0^E[\la v,n\ra n_if,f]\d n,\grad_v\phi\right)_{x,v}\\
    &+\left(-\int_{\S^2}\Jj_0^E[f,\la w,n\ra n_if]\d n,\grad_v\phi\right)_{x,v}\\
    &-\frac{1}{2} \left(\int_{\R^3\times \S^2}\int^\sigma_0\chi(x+(\frac{1}{2}\sigma-s) n)f(x-sn,v)f(x+(\sigma-s) n,w)\right.\\
    &\qquad\times\la v-w,n\ra_+^2n_in\d w\d n\d s,\grad_x\phi(x,v)\bigg)_{x,v}\\
    &-\frac{1}{2} \bigg(-\int_{(\R^3)^4}\int^{\pi/2}_0\chi(x-\frac{1}{2}\sigma n)f(x-\sigma n,v_{-\theta})f(x,w_{-\theta})\la v_{-\theta}-w_{-\theta},n\ra_+^2\\
    &\qquad n_i w\d n\d w\d \theta,\grad_v\phi(x,v)\bigg)_{x,v}.
\end{align*}
\end{proof}

\subsection{Energy current for Enskog collision integral: Proof of \eqref{Energy current for Enskog collision integral}}
\begin{proof}
Substituting $\phi(x,v)=|v|^2\varphi(x,v)$ into Lemma \ref{lem:Enskog}, we obtain:
\begin{multline*}
    \int_{(\R^3)^3\times\S^2}\chi(x+\frac{1}{2}\sigma n)f(x,v)f(x+\sigma n,w)\la v-w,n\ra_+\\
    \times \big[|v'|^2\varphi(x,v')-|v|^2\varphi(x,v)\big]\d x\d v\d n\d w.
\end{multline*}
Then, with the help of \eqref{pre-to-post collision transformation}, we split the difference $|v'|^2\varphi(x,v')-|v|^2\varphi(x,v)$ into four parts,
\begin{multline*}
\underbrace{|v|^2[\varphi(x,v')-\varphi(x,v)]}_{=:\varphi_1}-\underbrace{\la v,n\ra^2[\phi(x,v')-\varphi(x,v)]}_{=:\varphi_2}\\
+\underbrace{\la w,n\ra^2[\varphi(x,v')-\varphi(x,v)]}_{=:\varphi_3}-\underbrace{(\la v,n\ra ^2-\la w,n\ra^2)\varphi(x,v)}_{=:\varphi_4},
\end{multline*}
since
\[
|v'|^2=|v-\la v-w,n\ra n|^2=|v|^2-\la v,n\ra^2+\la w,n\ra^2.
\]

Following this, we use the result \eqref{Mass current for Enskog collision integral}, which we obtained in the last section. We find that
\begin{multline*}
 \int_{(\R^3)^3\times\S^2}\chi(x+\frac{1}{2}\sigma n)f(x,v)f(x+\sigma n,w)\la v-w,n\ra_+(\varphi_1-\varphi_2+\varphi_3)\d x\d v\d n\d w\\
 =\left(-\int_{\S^2}\Jj_0^E[(|v|^2-\la v,n\ra^2)f,f]+\Jj_0^E[f,\la w,n\ra^2f]\d n,\grad_v\phi\right)_{x,v}.
\end{multline*}
By the same process showed as above, we perform the change of variables $(v,w,x,n)\mapsto (w,v,x+\sigma n.-n)$ in the $\varphi_4$ term, then symmetrize it,

\begin{multline*}
 \int_{(\R^3)^3\times\S^2}\chi(x+\frac{1}{2}\sigma n)f(x,v)f(x+\sigma n,w)\la v-w,n\ra_+\varphi_4\d x\d v\d n\d w\\
 = \frac{1}{2}\int_{(\R^3)^3\times\S^2}\chi(x+\frac{1}{2}\sigma n)f(x,v)f(x+\sigma n,w)\la v-w,n\ra_+^2\la v+w,n\ra\\
 \times \big[\varphi(x,v)-\varphi(x+\sigma n,w)\big]\d x\d v\d n\d w.
\end{multline*}
Therefore, we replace $\varphi(x,v)-\varphi(x+\sigma n,w)$ by \eqref{pf rotation in v Enskog} and \eqref{pf translation in x Enskog}
\begin{multline*}
    \int_{(\R^3)^3\times\S^2}\chi(x+\frac{1}{2}\sigma n)f(x,v)f(x+\sigma n,w)\la v-w,n\ra_+^2n_i\varphi_4\d x\d v\d n\d w\\
    = \int_{(\R^3)^3\times\S^2}\int^\sigma_0\chi(x+\frac{1}{2}\sigma n)f(x,v)f(x+\sigma n,w)\la v-w,n\ra_+^2\la v+w,n\ra\\
    \times\la n,\grad_x\phi(x+sn,v)\ra\d x\d v\d n\d w\d s\\
  - \int_{(\R^3)^3\times\S^2}\int^{\pi/2}_0\chi(x+\frac{1}{2}\sigma n)f(x,v)f(x+\sigma n,w)\la v-w,n\ra_+^2\la v+w,n\ra\\
    \times\la w_\theta,\grad_v\phi(x+\sigma n,v_\theta)\ra\d x\d v\d n\d w\d \theta.
\end{multline*}
Finally, we perform the change of variables to ensure the variables of $\varphi$ is exact $x$ and $v$,
\begin{multline*}
    \int_{(\R^3)^3\times\S^2}\chi(x+\frac{1}{2}\sigma n)f(x,v)f(x+\sigma n,w)\la v-w,n\ra_+^2\la v+w,n\ra\varphi_{4}\d x\d v\d n\d w\\
    = - \int_{(\R^3)^3\times\S^2}\int^{\pi/2}_0\chi(x+\frac{1}{2}\sigma n)f(x,v)f(x+\sigma n,w)\la v-w,n\ra_+^2\la v+w,n\ra\\
    \times\la w_\theta,\grad_v\varphi(x+\sigma n,v_\theta)\ra\d x\d v\d n\d w\d \theta.
\end{multline*}
Then we rotate the velocities by $-\theta$, and translate $x$ along $n$ by $\sigma$,
\begin{multline*}
    \int_{(\R^3)^3\times\S^2}\chi(x+\frac{1}{2}\sigma n)f(x,v)f(x+\sigma n,w)\la v-w,n\ra_+^2\la v+w,n\ra\varphi_{4}\d x\d v\d n\d w\\
    = \bigg(-\int_{(\R^3)^3\times\S^2}\int^{\pi/2}_0\chi(x-\frac{1}{2}\sigma n)f(x-\sigma n,v_{-\theta})f(x,w_{-\theta})\la v_{-\theta}-w_{-\theta},n\ra_+^2\\
    \times \la v_{-\theta}+w_{-\theta},n\ra w\d n\d w\d \theta,\grad_v\varphi(x,v)\bigg)_{x,v}.
\end{multline*}

In summary, we have
\begin{align*}
    &\left(|v|^2E[f,f](x,v),\varphi(x,v)\right)_{x,v}=(-\J_0^E[|v|^2f,f],\grad_v\phi)_{x,v}\\
    &-\left(-\int_{\S^2}\Jj_0^E[\la v,n\ra^2f,f]\d n,\grad_v\varphi\right)_{x,v}+\left(-\int_{\S^2}\Jj_0^E[f,\la w,n\ra^2f]\d n,\grad_v\varphi\right)_{x,v}\\
    &-\frac{1}{2} \left(\int_{\R^3\times \S^2}\int^\sigma_0\chi(x+(\frac{1}{2}\sigma-s) n)f(x-sn,v)f(x+(\sigma-s) n,w)\right.\\
    &\qquad\times\la v-w,n\ra_+^2\la v+w,n\ra n\d w\d n\d s,\grad_x\varphi(x,v)\bigg)_{x,v}\\
    &-\frac{1}{2} \bigg(-\int_{\R^3\times\S^2}\int^{\pi/2}_0\chi(x-\frac{1}{2}\sigma n)f(x-\sigma n,v_{-\theta})f(x,w_{-\theta})\la v_{-\theta}-w_{-\theta},n\ra_+^2\\
    &\qquad\times\la v_{-\theta}+w_{-\theta},n\ra w\d n\d w\d \theta,\grad_v\varphi(x,v)\bigg)_{x,v},
\end{align*}
which is the weak formulation of \eqref{Energy current for Enskog collision integral}.
\end{proof}

\section{Proof of Theorem \ref{thm:Povzner}}\label{sec:proof of Povzner}
We have analogous results for Povzner collision integral.
\begin{lemma}\label{lem:Povzner}
Let $f,g\in C(\R^3\times\R^3)$ with rapidly decay at infinity. For each test function $\phi\in  C(R^3\times\R^3)$ with at most polynomial growth at infinity,
\begin{multline}\label{pf lem Povzner}
     (P[f,g],\phi)_{x,v}:=\int_{\R^3\times\R^3}P[f,g](x,v)\phi(x,v)\d x\d v\\
     =\int_{(\R^3)^4}f(x,v)g(y,w)J(x-y,v-w)[\phi(x,v')-\phi(x,v)]\d x\d y\d v\d w.
\end{multline}
\end{lemma}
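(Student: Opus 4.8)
\section*{Proof proposal for Lemma~\ref{lem:Povzner}}

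The plan is to mimic the proof of Lemma~\ref{lem:Enskog}. First I would expand $(P[f,g],\phi)_{x,v}$ as a four-fold integral over $(x,v,y,w)\in(\R^3)^4$, separating the gain and loss contributions,
\[
(P[f,g],\phi)_{x,v}=\int_{(\R^3)^4}\big(f(x,v')g(y,w')-f(x,v)g(y,w)\big)J(x-y,v-w)\phi(x,v)\d x\d y\d v\d w,
\]
where throughout $n=\nyx$. This integral is absolutely convergent by the rapid decay of $f,g$ together with assumptions (i) and (iv) on $J$, so Fubini applies and one may treat $(x,y)$ as fixed while performing a change of variables in $(v,w)$ alone.

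Next I would handle the gain term. With $x$ and $y$ held fixed, the vector $n=\nyx$ is a fixed unit vector, so the map $(v,w)\mapsto\big(v'(v,w,n),w'(v,w,n)\big)$ given by \eqref{pre-to-post collision transformation} is a linear involution — its inverse is itself, by the reversibility \eqref{reversible collision} — whose Jacobian determinant is $-1$ by \eqref{Jacobian of collision}. Substituting $(v,w)\mapsto(v',w')$ in the gain integral and using $\big|\det\big(\p(v',w')/\p(v,w)\big)\big|=1$ turns $f(x,v')g(y,w')$ into $f(x,v)g(y,w)$, turns $\phi(x,v)$ into $\phi(x,v')$, and turns the kernel argument $v-w$ into $v'-w'$; by the post-collisional invariance \eqref{Povzner kernel is invariant after collision} one has $J(x-y,v'-w')=J(x-y,v-w)$, so the gain term becomes $\int f(x,v)g(y,w)J(x-y,v-w)\phi(x,v')\d x\d y\d v\d w$. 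Adding back the untouched loss term then yields exactly \eqref{pf lem Povzner}.

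The one point that needs care — and the genuine difference from the Enskog computation — is that here $n$ is not a free integration variable over $\S^2$ but is slaved to the positions $x,y$; consequently the "$n\mapsto-n$" symmetrization used in Lemma~\ref{lem:Enskog} is unavailable, and only the reversibility \eqref{reversible collision} and the Jacobian \eqref{Jacobian of collision} are needed. In particular the time-reversibility assumption (ii) plays no role in this lemma. A secondary, purely technical issue is that $n=\nyx$ is undefined on the diagonal $\{x=y\}$, but this set is Lebesgue-negligible in $(\R^3)^4$ and carries no mass of the integrand, so it can be safely ignored.
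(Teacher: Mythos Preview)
Your proposal is correct and follows exactly the paper's approach: apply the involutive change of variables $(v,w)\mapsto(v',w')$ in the gain term with $x,y$ fixed, then invoke assumption \eqref{Povzner kernel is invariant after collision} to restore the kernel. Your write-up is in fact more detailed than the paper's own three-line proof, and your side remarks (that the $n\mapsto-n$ trick is unavailable and unnecessary here, that assumption (ii) is not used, and that the diagonal $\{x=y\}$ is negligible) are all accurate observations.
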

\begin{proof}
We do the transformation $(v,w)\mapsto (v',w')$ for the gain term in \eqref{Povzner collision integral}, then the left-hand side of \eqref{pf lem Povzner} becomes
\[
\int_{(\R^3)^4}f(x,v)g(y,w)J(x-y,v'-w')\phi(x,v')\d x\d y\d v\d w.
\]
We can conclude with the help of assumption \eqref{Povzner kernel is invariant after collision}.
\end{proof}
\subsection{Mass current for Povzner collision integral: Proof of \eqref{Mass current for Povzner collision integral} }
\begin{proof}
Note that
\begin{multline*}
    \phi(x,v')-\phi(x,v)=\int_{0}^{\lla v-w,\nyx\rra}\frac{\d}{\d s}\phi\left(x,v-s\nyx\right)\d s\\
     =-\int_{0}^{\left\la v-w,\nyx\right\ra}\lla \nyx,\grad_v\phi(x,v-s\nyx)\rra\d s,
\end{multline*}
We substitute the identity above into \eqref{lem:Povzner},
Subsequently, we translate $v$ and $w$ in the direction $\nyx$ by $s$, then we can rewrite $(P[f,g],\phi)_{x,v}$ by
\begin{multline*}
-\int_{(\R^3)^4}\int_{0}^{\lla v-w,\nyx\rra}f\left(x,v+s\nyx\right)g\left(y,w+s\nyx\right) \\
\times J\left(x-y,v-w\right)\lla \nyx,\grad_v\phi(x,v)\rra\d s\d y\d w\d x\d v.
\end{multline*}
Using the notation of Landau mass current for the Povzner collision integral \eqref{Landau current for the Povzner collision integral}, we may represent $(P[f,g],\phi)_{x,v}$ by
\[
\left(-\int_{\R^3}\Jj_0^P[f,g]\d y,\grad_v\phi(x,v)\right)_{x,v}
\]
We merely need to let $f=g$ to deduce the desired result in the sense of distributions.

\end{proof}

\subsection{Momentum current for Povzner collision integral: Proof of \eqref{Momentum current 1 for Povzner collision integral}\eqref{Momentum current 2 for Povzner collision integral}\eqref{Momentum current 3 for Povzner collision integral}}
\begin{proof}
We apply Lemma \ref{lem:Povzner} to the test function $v_i\phi(x,v)$ for $i=1,2,3$. We reformulate $(P[f,f]v_i,\phi)$ by
\begin{multline*}
    \int_{(\R^3)^4}f(x,v)f\left(y,w\right)J(x-y,v-w)\big[v'_i\phi(x,v')-v_i\phi(x,v)\big]\d x\d v\d y\d w.
\end{multline*}
Then we split the difference $v'\phi(x,v')-v\phi(x,v)$ into two parts,
\[
v_i'[\phi(x,v')-\phi(x,v)]+[v_i'-v_i]\phi(x,v)
\]
Thereafter, we use the $i$-th component of \eqref{pre-to-post collision transformation} to replace $v_i'$, hence
\begin{multline*}
\underbrace{v_i[\phi(x,v')-\phi(x,v)]}_{=:\phi_1}-\underbrace{\lla v,\nyx\rra \left(\nyx\right)_i[\phi(x,v')-\phi(x,v)]}_{=:\phi_2}\\
+\underbrace{\la w,\nyx\ra \left(\nyx\right)_i[\phi(x,v')-\phi(x,v)]}_{=:\phi_3}\\
-\underbrace{\lla v-w,\nyx\rra \left(\nyx\right)_i\phi(x,v)}_{=:\phi_4}.
\end{multline*}
We use Lemma \ref{lem:Povzner} for $\phi_1$, which shows
\begin{multline*}
 \int_{(\R^3)^4}f(x,v)f(y,w)\jxy\phi_1\d x\d v\d y\d w=(P[v_if,f],\phi)_{x,v}.
\end{multline*}
Following this, we use \eqref{Mass current for Povzner collision integral}, which we obtained in the last section. It shows that
\begin{multline*}
 \int_{(\R^3)^4}f(x,v)f(y,w)\jxy\phi_1\d x\d v\d y\d w\\
 =(-\J_0^P[v_if,f],\grad_v\phi)_{x,v}.
\end{multline*}
By the same token, we can deal with $\phi_2$ and $\phi_3$,
\begin{multline*}
 \int_{(\R^3)^4} f(x,v)f(y,w)\jxy\phi_2\d x\d v\d y\d w\\
 =\left(-\int_{\R^3}\Jj_0^P\left[\lla v,\nyx\rra \left(\nyx\right)_if,f\right]\d y,\grad_v\phi\right)_{x,v},
\end{multline*}
and
\begin{multline*}
 \int_{(\R^3)^4} f(x,v)f(x+\sigma n,w)\jxy\phi_3\d x\d v\d y\d w\\
 =\left(-\int_{\R^3}\Jj_0^P\left[f,\lla w,\nyx\rra \left(\nyx\right)_if\right]\d y,\grad_v\phi\right)_{x,v}.
\end{multline*}
Thanks to \eqref{Povzner collision kernel is time reversible}, we change variables from $(v,w,x,y)$ to $(w,v,y,x)$,
\begin{multline*}
 \int_{(\R^3)^4} f(x,v)f(y,w)\jxy\phi_4\d x\d v\d n\d w\\
 = -\int_{(\R^3)^4} f(x,v)f(y,w)\jxy \lla v-w,\nyx\rra\left(\nyx\right)_i\\
 \times\phi(y,w)\d x\d v\d n\d w,
\end{multline*}
Symmetrizing the identity above
\begin{multline*}
 \int_{(\R^3)^4} f(x,v)f(y,w)\jxy\phi_4\d x\d v\d y\d w\\
 = \frac{1}{2}\int_{(\R^3)^4} f(x,v)f(y,w)\jxy \lla v-w,\nyx\rra \left(\nyx\right)_i\\
 \times \big[\phi(x,v)-\phi(y,w)\big]\d x\d v\d y\d w.
\end{multline*}
Notice that
\begin{multline}\label{pf rotation in x and v Povzner}
\phi(x,v)-\phi(y, w)=-\int_{0}^{\pi/2}\frac{\d}{\d \theta}\phi(x_\theta,v_\theta)\d \theta\\
=-\int_0^{\pi/2} \la y_\theta,\grad_x\phi(x_\theta,v_\theta)\ra+\la w_\theta,\grad_v\phi(x_\theta,v_\theta)\ra\d \theta,    
\end{multline}
where $(v_\theta,w_\theta)$ are defined in \eqref{definition of rotation in velocities}, and $(x_\theta,y_\theta)$ are determined by
\begin{equation}
    \begin{pmatrix}
    x_\theta\\
    y_\theta
    \end{pmatrix}
    :=\begin{pmatrix}
    \cos \theta & \sin \theta\\
    -\sin \theta & \cos \theta
    \end{pmatrix}
    \begin{pmatrix}
    x\\
    y
    \end{pmatrix}.
\end{equation}

Hence we plug \eqref{pf rotation in x and v Povzner} into the $\phi_{4}$ term,
\begin{multline*}
    \int_{(\R^3)^4}f(x,v)f(y,w)\jxy \lla v-w,\nyx\rra \left(\nyx\right)_i\phi_{4}\d x\d v\d n\d w\\
    = \int_{(\R^3)^4}\int^{\pi/2}_0f(x,v)f(y,w)\jxy \lla v-w,\nyx\rra \left(\nyx\right)_i\\
    \times\left[\la y_\theta,\grad+\phi(x_\theta,v_\theta)\ra+\la w_\theta,\grad_v\phi(x_\theta,v_\theta)\ra\right]\d x\d v\d n\d w\d s.
\end{multline*}

Then we rotate both the positions and velocities by $-\theta$,
\begin{multline*}
    \int_{(\R^3)^4}f(x,v)f(y,w)\jxy \lla v-w,\nyx\rra \left(\nyx\right)_i\phi_{4}\d x\d v\d n\d w\\
    = \bigg(-\int_{(\R^3)^4}\int^{\pi/2}_0f(x_{-\theta},v_{-\theta})f(y_{-\theta},w_{-\theta})J(x_{-\theta}-y_{-\theta},v_{-\theta}-w_{-\theta}) \lla v-w,\nyx\rra \\
    \times \left(\frac{x_{-\theta}-y_{-\theta}}{|x-y|}\right)_iw\d n\d w\d \theta,\grad_v\phi(x,v)\bigg)_{x,v}\\
    +\bigg(-\int_{(\R^3)^4}\int^{\pi/2}_0f(x_{-\theta},v_{-\theta})f(y_{-\theta},w_{-\theta})J(x_{-\theta}-y_{-\theta},v_{-\theta}-w_{-\theta}) \lla v-w,\nyx\rra \\
    \times \left(\frac{x_{-\theta}-y_{-\theta}}{|x-y|}\right)_iy\d n\d w\d \theta,\grad_x\phi(x,v)\bigg)_{x,v},
\end{multline*}
since 
\[
 \lla v-w,\nyx\rra= \lla v_{-\theta}-w_{-\theta},\frac{y_{-\theta}-x_{-\theta}}{|y_{-\theta}-x_{-\theta}|}\rra,
\]
and 
\[
|y_{-\theta}-x_{-\theta}|=|y-x|.
\]
For $i=1,2,3 $, sum them all together, we arrive the weak formulation of \eqref{Momentum current 1 for Povzner collision integral}-\eqref{Momentum current 3 for Povzner collision integral}:
\begin{align*}
    &\left(v_iP[f,f](x,v),\phi(x,v)\right)_{x,v}=(-\J_0^P[v_if,f],\grad_v\phi)_{x,v}\\
    &\qquad\qquad\qquad-\left(-\int_{\R^3}\Jj_0^P\left[\lla v,\nyx\rra \left(\nyx\right)_if,f\right]\d y,\grad_v\phi\right)_{x,v}\\
    &\qquad\qquad\qquad+\left(-\int_{\R^3}\Jj_0^P\left[f,\lla w,\nyx\rra \left(\nyx\right)_if\right]\d y,\grad_v\phi\right)_{x,v}\\
    &-\frac{1}{2} \bigg(-\int_{(\R^3)^4}\int^{\pi/2}_0f(x_{-\theta},v_{-\theta})f(y_{-\theta},w_{-\theta})J(x_{-\theta}-y_{-\theta},v_{-\theta}-w_{-\theta})  \\
    &\qquad\qquad\times \lla v-w,\nyx\rra\left(\frac{x_{-\theta}-y_{-\theta}}{|x-y|}\right)_iw\d n\d w\d \theta,\grad_v\phi(x,v)\bigg)_{x,v}\\
    &-\frac{1}{2}\bigg(-\int_{(\R^3)^4}\int^{\pi/2}_0f(x_{-\theta},v_{-\theta})f(y_{-\theta},w_{-\theta})J(x_{-\theta}-y_{-\theta},v_{-\theta}-w_{-\theta})\\
    &\qquad\qquad\times \lla v-w,\nyx\rra\left(\frac{x_{-\theta}-y_{-\theta}}{|x-y|}\right)_iy\d n\d w\d \theta,\grad_x\phi(x,v)\bigg)_{x,v}.
\end{align*}

\end{proof}
If we additionally assume $J(x-y,v-w)$ is radially symmetric, i.e. $J(x-y,v-w)=J(|x-y|,|v-w|,\cos(\widehat{x-y,v-w}))$, which is true for Boltzmann collision kernel. $J(x_{-\theta}-y_{-\theta},v_{-\theta}-w_{-\theta})$ can be further simplified by $\jxy$.

\subsection{Energy current for Povzner collision integral: Proof of \eqref{Energy current for Povzner collision integral}}
\begin{proof}
Using the test function $\phi(x,v)=|v|^2\varphi(x,v)$ in the statement of Lemma \ref{lem:Povzner}, we obtain:
\begin{multline*}
    \int_{(\R^3)^4}f(x,v)f(y,w)\jxy  \\
    \times \big[|v'|^2\varphi(x,v')-|v|^2\varphi(x,v)\big]\d x\d v\d y\d w.
\end{multline*}
Then, with the help of \eqref{pre-to-post collision transformation}, we split the difference $|v'|^2\varphi(x,v')-|v|^2\varphi(x,v)$ into four parts,
\begin{multline*}
\underbrace{|v|^2[\varphi(x,v')-\varphi(x,v)]}_{=:\varphi_1}-\underbrace{\lla v,\nyx\rra^2[\phi(x,v')-\varphi(x,v)]}_{=:\varphi_2}\\
+\underbrace{\lla w,\nyx\rra^2[\varphi(x,v')-\varphi(x,v)]}_{=:\varphi_3}\\
-\underbrace{\left(\lla v,\nyx\rra ^2-\lla w,\nyx\rra^2\right)\varphi(x,v)}_{=:\varphi_4},
\end{multline*}
since
\begin{multline*}
    |v'|^2=\left|v-\lla v-w,\nyx\rra \nyx\right|^2\\=|v|^2-\lla v,\nyx\rra^2+\lla w,\nyx\rra^2.
\end{multline*}
Following this, we use the result \eqref{Mass current for Povzner collision integral}, which we obtained in the last section. It shows that
\begin{multline*}
 \int_{(\R^3)^4}f(x,v)f(y,w)\jxy  (\varphi_1-\varphi_2+\varphi_3)\d x\d v\d y\d w\\
 =\left(-\int_{\R^3}\Jj_0^P\bigg[(|v|^2-\lla v,\nyx\rra^2)f,f\right]\\+\Jj_0^E\left[f,\lla w,\nyx\rra^2f\right]\d y,\grad_v\phi\bigg)_{x,v}.
\end{multline*}

By the same process showed before, We perform the change of variables $(v,w,x,y)\mapsto (w,v,y,x)$ in $\varphi_4$ term, then symmetrize it,

\begin{multline*}
 \int_{(\R^3)^4}f(x,v)f(y,w)\jxy \lla v-w,\nyx\rra \varphi_4\d x\d v\d y\d w\\
 = \frac{1}{2}\int_{(\R^3)^4}f(x,v)f(y,w)\jxy \lla v-w,\nyx\rra \lla v+w,\nyx\rra\\
 \times \big[\varphi(x,v)-\varphi( y,w)\big]\d x\d v\d y\d w.
\end{multline*}
Therefore, we replace $\varphi(x,v)-\varphi(y,w)$ by \eqref{pf rotation in x and v Povzner},
\begin{multline*}
    \int_{(\R^3)^4}f(x,v)f(y,w)\jxy \varphi_4\d x\d v\d y\d w\\
    = -\int_{(\R^3)^4}\int^{\pi/2}_0\jxy \lla v-w,\nyx\rra \lla v+w,\nyx\rra\\
    \times f(x,v)f(y,w)\la y_\theta,\grad_x\phi(x_{\theta},v_\theta)\ra\d x\d v\d y\d w\d \theta\\
  - \int_{(\R^3)^4}\int^{\pi/2}_0\jxy \lla v-w,\nyx\rra \lla v+w,\nyx\rra\\
    \times f(x,v)f(y,w)\la w_\theta,\grad_v\phi(x_\theta,v_\theta)\ra\d x\d v\d y\d w\d \theta.
\end{multline*}
Finally, we rotate both the positions and velocities by $-\theta$, 
\begin{multline*}
    \int_{(\R^3)^4}f(x,v)f(y,w)\jxy \varphi_4\d x\d v\d y\d w\\
    = \bigg(-\int_{(\R^3)^2}\int^{\pi/2}_0f(x_{-\theta},v_{-\theta})f(y_{-\theta},w_{-\theta})J(x_{-\theta}-y_{-\theta},v_{-\theta}-w_{-\theta})\\
    \times \lla v-w,\nyx\rra \lla v+w,\nyx\rra y\d y\d w\d \theta,\grad_x\phi(x,v)\bigg)_{x,v}\\
  +\bigg(- \int_{(\R^3)^2}\int^{\pi/2}_0f(x_{-\theta},v_{-\theta})f(y_{-\theta},w_{-\theta})J(x_{-\theta}-y_{-\theta},v_{-\theta}-w_{-\theta})\\
    \times \lla v-w,\nyx\rra \lla v+w,\nyx\rra w\d y\d w\d \theta,\grad_v\phi(x,v)\bigg)_{x,v}.
\end{multline*}

In summary, we have
\begin{align*}
    &\left(|v|^2P[f,f](x,v),\varphi(x,v)\right)_{x,v}=(-\J_0^P[|v|^2f,f],\grad_v\phi)_{x,v}\\
    &-\left(-\int_{\R^3}\Jj_0^P[\lla v,\nyx\rra^2f,f]\d y,\grad_v\varphi\right)_{x,v}\\
    &+\left(-\int_{\R^3}\Jj_0^P[f,\lla w,\nyx\rra^2f]\d y,\grad_v\varphi\right)_{x,v}\\
    &-\frac{1}{2} \bigg(-\int_{(\R^3)^2}\int^{\pi/2}_0f(x_{-\theta},v_{-\theta})f(y_{-\theta},w_{-\theta})J(x_{-\theta}-y_{-\theta},v_{-\theta}-w_{-\theta})\\
    &\qquad\times \lla v-w,\nyx\rra \lla v+w,\nyx\rra y\d y\d w\d \theta,\grad_x\phi(x,v)\bigg)_{x,v}\\
    &-\frac{1}{2} \bigg(- \int_{(\R^3)^2}\int^{\pi/2}_0f(x_{-\theta},v_{-\theta})f(y_{-\theta},w_{-\theta})J(x_{-\theta}-y_{-\theta},v_{-\theta}-w_{-\theta})\\
    &\qquad\times \lla v-w,\nyx\rra \lla v+w,\nyx\rra w\d y\d w\d \theta,\grad_v\phi(x,v)\bigg)_{x,v},
\end{align*}
which is the weak formulation of \eqref{Energy current for Enskog collision integral}.
\end{proof}

\section{Conclusion}

This paper serves as a complement to \cite{CCG2024}. While the general framework developed there does not cover all delocalized collision models, we have here established conservative formulations for two cases that fall outside the framework given in \cite{CCG2024}, namely the orginal Povzner and standard Enskog collision integrals. A key feature of our result is that it does not rely on asymptotic expansions around equilibrium: the conservative form remains valid for distributions far from equilibrium, thereby extending its mathematical and physical relevance. This provides new insight into the structure of kinetic models where standard perturbative approaches fail, and reinforces the role of conservation and entropy principles in non-equilibrium regimes.

As a perspective, and in analogy with \cite{CCG2024}, we plan to extend these results to multispecies systems. In particular, we aim to investigate how the delocalized collision framework can be adapted to the study of dense gases.

\section*{Acknowledgements}
The author acknowledges financial support from the École Doctorale de Sciences Mathématiques de Paris Centre.  
This research was carried out at Sorbonne University.  
The author is grateful to Professor François Golse for proofreading the manuscript.

\bibliographystyle{alpha}
\bibliography{ConservativeForm}
\end{document}